\newcommand{\R}{\mathbb{R}}
\newcommand{\inv}{^{-1}}
\newcommand{\sm}{\setminus}
\newtheorem{thm}{Theorem}
\newtheorem{lemma}[thm]{Lemma}
\newtheorem{observation}[thm]{Observation}
\theoremstyle{definition}
\newtheorem{definition}[thm]{Definition}
\theoremstyle{remark}
\newtheorem{remark}{Remark}
\newtheorem*{ack}{Acknowledgments}
\newtheorem*{outline}{Outline}
\DeclareMathOperator{\vol}{vol}
\DeclareMathOperator{\graph}{graph}
\DeclareMathOperator{\dist}{dist}
\DeclareMathOperator{\interior}{interior}
\DeclareMathOperator{\RotSym}{RotSym_{\emph{n}}}
\DeclareMathOperator{\RotSymm}{RotSym}
\begin{document}
\title{On the lower semicontinuity of the ADM mass}
\date{\today}
\author{Jeffrey L. Jauregui}
\address{Dept. of Mathematics,
Union College,
Schenectady, NY 12308}
\email{jaureguj@union.edu}

\begin{abstract}
The ADM mass, viewed as a functional on the space of asymptotically flat Riemannian metrics of nonnegative scalar curvature, fails to be continuous for many natural topologies.  In this paper we prove that lower semicontinuity holds in natural settings: first, for pointed Cheeger--Gromov convergence (without any symmetry assumptions) for $n=3$, and second, assuming rotational symmetry, for weak convergence of the associated canonical embeddings into Euclidean space, for $n \geq 3$.  We also apply recent results of LeFloch and Sormani to deal with the rotationally symmetric case, with respect to a pointed type of intrinsic flat convergence.  We provide several examples, one of which demonstrates that the positive mass theorem is implied by a statement of the lower semicontinuity of the ADM mass.
\end{abstract}

\maketitle

\section{Introduction}
In general relativity a number of important open questions involve taking a limit of a sequence $\{(M_i,g_i)\}_{i=1}^\infty$ of asymptotically flat Riemannian manifolds of nonnegative scalar curvature. For instance, such limits arise in stability problems (e.g. \cites{bray_finster,finster_kath, lee,lee_sormani,lee_sormani2,lefloch_sormani,huang_lee, huang_lee_sormani}) and in flows of asymptotically flat manifolds (e.g. \cites{dai_ma, haslhofer, list, OW}).  In these contexts it is desirable to understand how the ADM (total) masses \cite{adm} of $(M_i,g_i)$ compare to the ADM mass of the limit space, $(N,h)$.  Recall that an asymptotically flat manifold can be viewed as an initial data set for the Einstein equations in general relativity, and the ADM mass represents the total mass contained therein.

While it is well-known that the ADM mass is not continuous with respect to many natural topologies, some examples (see section \ref{sec_examples}) suggest that lower semicontinuity ought to hold:
\begin{equation}
\label{eqn_lsc}
m_{ADM}(N,h) \leq \liminf_{i \to \infty} m_{ADM}(M_i,g_i).
\end{equation}
In this paper the first main result is a proof of (\ref{eqn_lsc}) for pointed Cheeger--Gromov convergence (see Definition \ref{def_CG}), subject to natural hypotheses:
\begin{thm}
\label{thm_CG}
Let $\{(M_i, g_i,p_i)\}$ be a sequence of asymptotically flat pointed Riemannian $3$-manifolds that converges in the pointed $C^2$ Cheeger--Gromov sense to an asymptotically flat pointed Riemannian $3$-manifold $(N,h,q)$.    Assume that for each $i$, $(M_i,g_i)$ contains no compact minimal surfaces and that $g_i$ has nonnegative scalar curvature.  Then (\ref{eqn_lsc}) holds.
\end{thm}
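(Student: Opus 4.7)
The plan is to prove (\ref{eqn_lsc}) by combining two ingredients: a quasi-local approximation of the ADM mass from below by the Hawking mass of large coordinate spheres in the asymptotic end, and the Huisken--Ilmanen weak inverse mean curvature flow (IMCF), which in dimension $3$ provides a Penrose-type bound of the Hawking mass by the ADM mass. The key point is that the Hawking mass
\[ m_H(\Sigma) = \sqrt{\frac{|\Sigma|_g}{16\pi}}\left(1 - \frac{1}{16\pi}\int_\Sigma H_g^2\,d\sigma_g\right) \]
is a local geometric functional depending only on area and mean curvature, and is therefore stable under $C^2$ convergence of the ambient metric on compact sets.

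Concretely, given $\epsilon > 0$, I would first choose a coordinate sphere $S_R$ in the asymptotic end of $(N, h)$ with $R$ so large that $m_H(S_R, h) > m_{ADM}(N, h) - \epsilon$; this is possible by the standard asymptotic expansion showing $m_H(S_R, h) \to m_{ADM}(N, h)$ as $R \to \infty$. The pointed $C^2$ Cheeger--Gromov hypothesis provides diffeomorphisms $\phi_i : U_i \to V_i$, with $q \in U_i \subset N$ exhausting $N$ and $\phi_i^* g_i \to h$ in $C^2_{\mathrm{loc}}(N)$. For $i$ sufficiently large, $S_R \subset U_i$ and $\Sigma_i := \phi_i(S_R)$ is a $C^2$-embedded $2$-sphere in $M_i$; the $C^2$ convergence of metrics yields convergence of the induced area form and mean curvature, so $m_H(\Sigma_i, g_i) \to m_H(S_R, h)$. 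Applying the Huisken--Ilmanen theorem to $(M_i, g_i)$, which has nonnegative scalar curvature and no compact minimal surfaces, would give $m_H(\Sigma_i, g_i) \le m_{ADM}(M_i, g_i)$. Assembling the inequalities,
\[ m_{ADM}(N,h) - \epsilon < m_H(S_R, h) = \lim_{i \to \infty} m_H(\Sigma_i, g_i) \le \liminf_{i \to \infty} m_{ADM}(M_i, g_i), \]
and sending $\epsilon \to 0$ proves (\ref{eqn_lsc}).

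The main technical obstacle is in the final application of Huisken--Ilmanen, because the bound $m_H(\Sigma) \le m_{ADM}$ is typically stated for an \emph{outer-minimizing} initial surface $\Sigma$, and the $\Sigma_i$ need not be outer-minimizing in $M_i$ even though $S_R$ is (for $R$ large) in $N$. In weak IMCF, $\Sigma_i$ first jumps to its strictly minimizing hull $\Sigma_i^*$, and one needs the Hawking mass to be nondecreasing across this jump, which is known under the auxiliary conditions that $m_H(\Sigma_i) \ge 0$ and $\Sigma_i^*$ is a connected topological sphere. To establish these I would argue that for $R$ large and $i$ large (depending on $R$), the surface $\Sigma_i$ is $C^2$-close to the Euclidean round sphere in the AF coordinates, hence is strictly mean-convex with strictly positive Hawking mass; combined with the no-compact-minimal-surfaces hypothesis and the asymptotic flatness of each $(M_i, g_i)$, a comparison argument in the AF end of $M_i$ should force $\Sigma_i^*$ to be a connected $C^{1,1}$ sphere $C^0$-close to $\Sigma_i$, with no drop in Hawking mass at the jump. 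Positivity of $m_{ADM}(M_i, g_i)$, which the Penrose inequality needs to be meaningful, is guaranteed by the positive mass theorem for each $(M_i, g_i)$.
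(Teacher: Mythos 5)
Your overall strategy is the same as the paper's: bound $m_{ADM}(N,h)$ from below (up to $\epsilon$) by the Hawking mass of a large coordinate sphere $S_R$ in $(N,h)$, transport $S_R$ into $(M_i,g_i)$ by the Cheeger--Gromov maps, use $C^2$ convergence to conclude $m_H^{(i)}(\Sigma_i)\to m_H(S_R)$, and then invoke Huisken--Ilmanen to get $m_H^{(i)}(\Sigma_i)\le m_{ADM}(M_i,g_i)$. You also correctly isolate the crux: Theorem \ref{thm_imcf} requires $\Sigma_i=\Phi_i(S_R)$ to be area-outer-minimizing in $(M_i,g_i)$, and this is exactly where your argument has a genuine gap. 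First, the auxiliary fact you call ``known''---that the Hawking mass does not decrease across the initial jump to the strictly minimizing hull provided $m_H(\Sigma_i)\ge 0$ and the hull is a connected sphere---is not a standard result and is not justified: across the jump one only has $|\Sigma_i^*|\le|\Sigma_i|$ and $\int_{\Sigma_i^*}H^2\,d\sigma\le\int_{\Sigma_i}H^2\,d\sigma$, which under $m_H(\Sigma_i)\ge 0$ yields merely $m_H(\Sigma_i^*)\ge\sqrt{|\Sigma_i^*|/|\Sigma_i|}\;m_H(\Sigma_i)$; a strict drop in area permits a loss of Hawking mass, so you are back to needing $\Sigma_i^*$ to essentially coincide with $\Sigma_i$. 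Second, the proposed ``comparison argument in the AF end of $M_i$'' does not confront the real obstacle: pointed Cheeger--Gromov convergence controls $(M_i,g_i)$ only on a fixed compact neighborhood of the base point, and the asymptotically flat structures of the $M_i$ are in no way uniform in $i$, nor are they located where you have convergence. A priori, a minimal-area enclosure of $\Sigma_i$ could exit the controlled region and close up with small area in a part of $M_i$ about which you know nothing---precisely the ``hidden region'' phenomenon of Example 4, which shows that closeness to $(N,h)$ on compact sets cannot by itself force $\Sigma_i$ to be outer-minimizing.

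The paper closes this gap with a three-case analysis of the outermost minimal area enclosure $\tilde\Sigma_i$ of $\Sigma_i$: a component of $\tilde\Sigma_i$ disjoint from $\Sigma_i$ would be a compact minimal surface, excluded by hypothesis; if $\tilde\Sigma_i$ stays inside the controlled region $\Phi_i(B_{7r_0})$, mean-curvature comparison with the coordinate spheres $S_r$ (which have positive mean curvature for the pulled-back metrics $h_i$ once $i$ is large) forces $\tilde\Sigma_i=\Sigma_i$; and if $\tilde\Sigma_i$ meets $\Sigma_i$ but reaches $\Phi_i(S_{4r_0})$, the Colding--Minicozzi monotonicity formula, applied with curvature and injectivity-radius bounds extracted uniformly from the $C^2$ convergence, forces its area inside the controlled region to exceed roughly $9\pi r_0^2$, contradicting $|\tilde\Sigma_i|_{g_i}\le|\Sigma_i|_{g_i}\approx 4\pi r_0^2$. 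Some quantitative substitute for this step---an argument confined to the region where you actually have convergence, ruling out both disconnected minimal components and escape of the enclosure---is what your proposal is missing; without it, the application of Huisken--Ilmanen to $\Sigma_i$ is unjustified.
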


The second main result is a proof of (\ref{eqn_lsc}) for rotationally symmetric asymptotically flat manifolds, subject to similar hypotheses.  The topology we consider here is that of weak convergence (in the sense of currents) of canonical isometric embeddings into Euclidean space\footnote{It is an unfortunate coincidence of terminology that the ADM mass is unrelated to the \emph{mass} of rectifiable currents, which is well-known to be lower semicontinuous with respect to weak convergence.}.  A more formal statement is given as Theorem \ref{thm_main} in section \ref{sec_rot_sym}.
\begin{thm}[Informal statement]
\label{thm_intro}
Let $\{(M_i,g_i)\}$ denote a sequence of rotationally symmetric, asymptotically flat Riemannian manifolds of nonnegative scalar curvature.  Assume $\partial M_i$ is either empty or a minimal surface, and that $M_i$ contains no other compact minimal surfaces.  If $(M_i, g_i)$ converges weakly to some $(N,h)$ then (\ref{eqn_lsc}) holds.
\end{thm}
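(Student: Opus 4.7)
The strategy is to represent the ADM mass of each rotationally symmetric manifold as a limit of Hawking masses of coordinate spheres, exploit the monotonicity of these Hawking masses under nonnegative scalar curvature, and transfer that information through the weak convergence of the canonical embeddings.

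First I would place each $(M_i,g_i)$ in area-radius coordinates, writing $g_i = V_i(s)^{-2}\,ds^2 + s^2 g_{S^{n-1}}$ on $[s_{i,0},\infty)$, with $V_i(s_{i,0})=0$ at an inner minimal boundary (if present) and $V_i\to 1$ at infinity. The canonical isometric embedding into $\R^{n+1}$ then realizes $(M_i,g_i)$ as the hypersurface of revolution generated by the meridian $s\mapsto(s,z_i(s))$, where $z_i$ is nondecreasing and $1+z_i'(s)^2 = V_i(s)^{-2}$. The Hawking mass of the coordinate sphere of area radius $s$ is
\[
m_H^{(i)}(s) \;=\; \tfrac{s^{n-2}}{2}\bigl(1-V_i(s)^2\bigr) \;=\; \tfrac{s^{n-2}}{2}\cdot\frac{z_i'(s)^2}{1+z_i'(s)^2},
\]
and the hypotheses force $m_H^{(i)}(\cdot)$ to be nondecreasing with $m_H^{(i)}(s) \nearrow m_{ADM}(M_i,g_i)$ as $s\to\infty$.

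Next I would pass to a subsequence realizing $\liminf_i m_{ADM}(M_i,g_i)$, which may be assumed finite. Weak convergence of the canonical embeddings as integer-rectifiable currents in $\R^{n+1}$, together with the monotonicity (hence BV character) of the $z_i$ and a uniform bound on $z_i$ on compact sub-intervals forced by the asymptotic normalization, yields via Helly's selection theorem a further subsequence along which $z_i\to z$ pointwise a.e., where $z$ is the height function of the limit $(N,h)$. Since each $m_H^{(i)}(s)$ is nondecreasing in $s$ and uniformly bounded, a diagonal Helly extraction also produces a nondecreasing limit $\mu(s) = \lim_i m_H^{(i)}(s)$ for a.e.\ $s$.

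Finally I would identify $\mu$ with $m_H^{(N)}$ almost everywhere. The displayed formula inverts to
\[
z_i'(s) \;=\; \sqrt{\frac{2 m_H^{(i)}(s)/s^{n-2}}{1 - 2 m_H^{(i)}(s)/s^{n-2}}},
\]
so on the full-measure set where $m_H^{(i)}(s)\to\mu(s)$ the derivatives $z_i'(s)$ converge pointwise to the function $w(s)$ defined by the same formula with $\mu$ replacing $m_H^{(i)}$. The uniform bound on $m_{ADM}(M_i,g_i)$ provides uniform $L^\infty_{\mathrm{loc}}$ control on $z_i'$ on compact sub-intervals bounded away from any limit horizon radius, so dominated convergence applied to the identity $\int_{s_*}^{s} z_i'(t)\,dt = z_i(s) - z_i(s_*)$ yields $z(s) - z(s_*) = \int_{s_*}^s w(t)\,dt$, forcing $z' = w$ a.e.\ and hence $\mu(s) = m_H^{(N)}(s)$ a.e. Combining these steps, for a.e.\ $s$,
\[
\liminf_{i\to\infty} m_{ADM}(M_i,g_i) \;\geq\; \lim_{i\to\infty} m_H^{(i)}(s) \;=\; m_H^{(N)}(s),
\]
and letting $s\to\infty$ recovers (\ref{eqn_lsc}). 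The hard part will be controlling the height functions near an inner minimal boundary, where $V_i\to 0$ makes $z_i'$ singular and the Helly/dominated-convergence argument breaks down, and verifying that the asymptotic normalization implicit in the definition of the canonical embedding is preserved under the weak-current convergence hypothesis.
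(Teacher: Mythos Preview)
Your overall strategy---representing the ADM mass as the limit of monotone Hawking masses and passing that monotonicity through the convergence---is exactly the paper's. The gap is in how you obtain pointwise convergence of the height functions.

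You invoke ``a uniform bound on $z_i$ on compact sub-intervals forced by the asymptotic normalization.'' No such normalization exists in the setup: the graphical representation is determined only up to an additive constant, and that constant is part of the data of the embedding (see the paragraph following the statement of Theorem~\ref{thm_main}). Nothing prevents $z_i(s_0)\to+\infty$ at a fixed radius $s_0$ even when the ADM masses are uniformly bounded, and in that regime Helly simply does not apply. The paper treats this case separately (Part~1 of the proof): the weak limit $N$ is then trapped inside a solid cylinder of some radius $r_*$, and one argues via the mean value theorem that $z_i'$ blows up near $r_*$, which together with the Hawking mass formula bounds $m_{ADM}(N)\leq \tfrac12 r_*^{n-2}$ by $\liminf m_i$. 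Your proposal does not cover this regime, and the difficulty you flag at the end (the inner boundary where $V_i\to 0$) is not where the argument actually breaks.

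Even in the non-blow-up regime, Helly only produces a subsequential limit $z$ with no a priori link to the given weak limit $N$. The paper instead uses the weak-convergence hypothesis directly: if $\{z_i(a)\}$ fails to converge it must oscillate between two clusters, and a compactly supported test form $\varphi=\rho\,dx_1\wedge\cdots\wedge dx_n$ separating those clusters gives $\int_{\graph(z_i)}\varphi$ two distinct subsequential limits, contradicting weak convergence (Lemma~\ref{lemma_uniform_convergence}). This is what pins down pointwise (hence locally uniform, via the derivative bound) convergence $z_i\to z$ and identifies $\graph(z)$ with $N$ outside a cylinder.

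Once that is secured, your route to $z_i'\to z'$ a.e.---Helly on the monotone, uniformly bounded Hawking masses $m_H^{(i)}$, invert to get $z_i'\to w$ a.e., then dominated convergence on $z_i(s)-z_i(s_*)=\int_{s_*}^s z_i'$ to force $z'=w$---is a legitimate and slightly slicker alternative to the paper's Lemma~\ref{lemma_derivs_converge}, which instead shows directly that a persistent gap $|z_i'(r_0)-z'(r_0)|\geq 3\epsilon$ propagates, via Hawking-mass monotonicity, to a one-sided bound on $z_i'$ over a fixed interval $[r_0,r_0+\delta]$, and integrates to a contradiction. Either argument closes the proof once pointwise convergence of $z_i$ is in hand; that first step is what your proposal is missing.
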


The first source of motivation we present arises from a conjecture of Bartnik pertaining to the quasi-local mass problem in general relativity \cites{bartnik,bartnik2}.  Suppose $\Omega$
is a compact Riemannian 3-manifold with boundary $\partial\Omega$.  Assume $\Omega$ has nonnegative scalar curvature, and let $\partial\Omega$ have mean curvature $H$ and induced metric $\gamma$.  Consider all asymptotically flat 3-manifolds $(N,h)$
that have nonnegative scalar curvature, contain no compact minimal surfaces, and whose boundary is compact and isometric to $\gamma$ with corresponding mean curvature $H$.  
(The significance of matching the boundary metrics and mean curvatures is that nonnegative scalar curvature holds in a distributional sense across the interface.) 
The infimum of the ADM mass among all such $(N,h)$ is called the \emph{Bartnik mass} of $\Omega$ and is a well-known example of a quasi-local mass functional. 

Bartnik conjectured that the above infimum is  achieved by some $(N,h)$, called a \emph{minimal mass extension}.  One program to approach this problem directly is to consider
an ADM-mass-minimizing sequence $(M_i, g_i)$ and attempt to extract a convergent subsequence in some topology, say with limit $(N,h)$.  However, finding such a topology, together with a compactness theorem, remains a highly nontrivial open problem.  An additional step necessary to show $(N,h)$ is in fact a minimal mass extension would be that the ADM mass does not increase when passing to the limit, i.e., that (\ref{eqn_lsc}) holds.

A second source of motivation lies in the goal finding a new proof of the positive mass theorem \cites{schoen_yau,witten} that uses Ricci flow or a related flow (see, e.g., \cites{OW,dai_ma,list,haslhofer}).  A sketch of a proof would be to initiate Ricci flow on an asymptotically flat manifold $(M,g)$ of nonnegative scalar curvature.  Short-time existence is known, asymptotic flatness and the nonnegativity of scalar curvature are preserved, and the ADM mass is constant along the flow \cites{dai_ma,OW}.
Performing surgery as needed, one might attempt to show that the space eventually converges to Euclidean space in some sense.  However, since the ADM mass is constant under the Ricci flow, it is clear that the convergence cannot be  in a topology for which the ADM mass is continuous.  To prove nonnegativity of the ADM mass of the initial space, it would be necessary to know the ADM mass does not increase when passing to the limit.

\begin{outline}
In section \ref{sec_examples} we give a number of examples to illustrate some of the subtleties of the problem, to motivate why lower semicontinuity is plausible, and to demonstrate why the hypotheses in Theorem \ref{thm_CG} are necessary.  Example 3 is of particular interest, because it gives a sense in which lower semicontinuity implies the positive mass theorem.  Section \ref{sec_CG} contains a proof of Theorem \ref{thm_CG}; section \ref{sec_rot_sym} provides some preliminaries for rotationally symmetric manifolds before giving the proof of Theorem \ref{thm_main}. We conclude in section \ref{sec_IF} with a proof of lower semicontinuity in rotational symmetry for a type of pointed intrinsic flat convergence \cite{sormani_wenger}, using recent results of LeFloch and Sormani \cite{lefloch_sormani}.
\end{outline}

\begin{ack}
The author would like to thank Graham Cox and Christina Sormani for helpful discussions.
\end{ack}

\section{Examples}
\label{sec_examples}
We give a series of examples to illustrate some of the issues present in dealing with convergence of asymptotically flat manifolds and the behavior of the ADM mass.
In particular, examples 1 and 4 below show that the ADM mass is \emph{not} lower semi-continuous with respect to any reasonable 
notion of pointed convergence, without some additional assumptions on the scalar curvature and the absence of compact minimal surfaces.  

Recall that pointed notions of convergence are natural to consider for noncompact manifolds.  By ``a reasonable notion of pointed convergence'' of pointed Riemannian manifolds $(M_i,g_i,p_i)$ to $(N,h,q)$, we mean \emph{any} type of convergence that satisfies the following property:  if given any $r>0$, the metric ball of radius $r$ about $p_i$ in $(M_i,g_i)$ is isometric to such a ball about $q$ in $(N,h)$, for $i$ sufficiently large, then $(M_i, g_i, p_i)$ converges to $(N,h,q)$.

\vspace{2mm}
\paragraph{\emph{Example 1 (Flattened-out Schwarzschild):}}  Fix a constant $m>0$.  
Let $(M,g)$ be the graph in $\R^4$, endowed with the induced metric, of the function $f:\R^3 \sm B_{2m}(\vec 0)\to \R$ given by $f(x) = \sqrt{8m(|x|-2m)}$, where $|x|$ is the distance to the origin.  $(M,g)$ is
isometric to the (spatial) Schwarz\-schild manifold of ADM mass $m$. This graph, represented in one lower dimension, is depicted in figure \ref{fig_schwarz}.  The minimal surface $\partial M$ is called the horizon of $M$.

\begin{figure}
\begin{center}
\includegraphics[height=1.3in]{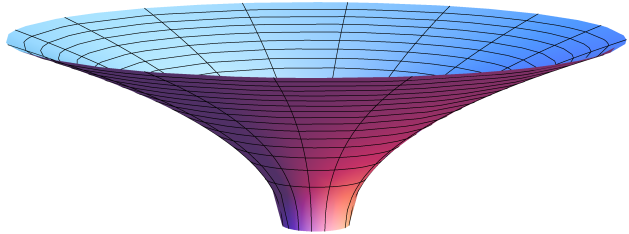}
\caption{\small The Schwarzschild manifold represented as a graph. \label{fig_schwarz}}
\end{center}
\end{figure}

For each integer $i > 0$, define
$$f_i(x) = \min(i, f(x)),$$
whose graph is depicted in figure \ref{fig_schwarz_out}.  Now, smooth $f_i$ on a small annulus about its non-smooth set, and call the result $\tilde f_i$.  Let $(M_i, g_i)$ be the graph of $\tilde f_i$, which is an asymptotically flat manifold.  In fact, the ADM mass of $(M_i,g_i)$ vanishes for each $i$, since outside a compact set, $\tilde f_i$ is constant (and so $g_i$ is flat). 
Let $p = (2m,0,0,0)$, which belongs to $M$ and all $M_i$.  Then $(M_i, g_i,p)$ converges in any reasonable pointed sense to $(M,g,p)$.  However, 
$$\liminf_{i \to \infty} m_{ADM}(M_i,g_i) = 0 < m = m_{ADM} (M,g),$$
violating (\ref{eqn_lsc}).

Note that in this example, each $(M_i,g_i)$ has negative scalar curvature somewhere, no matter how the smoothing is performed.  (This follows from the equality case of the positive mass theorem \cites{schoen_yau,witten}.)  This example reveals that to establish lower semicontinuity, nonnegative scalar curvature is a necessary hypothesis.

\begin{figure}
\begin{center}
\includegraphics[height=1.3in]{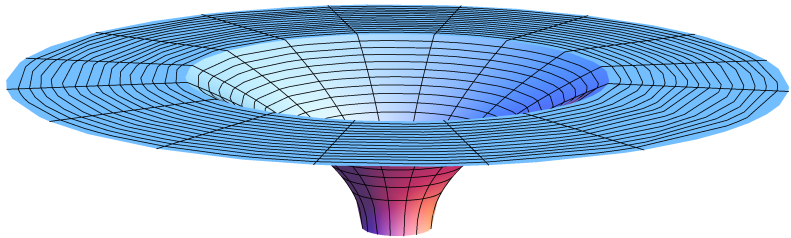}
\caption{\small An illustration of the ``flattened-out'' Schwarzschild manifold of Example 1. \label{fig_schwarz_out}}
\end{center}
\end{figure}

\vspace{2mm}
\paragraph{\emph{Example 2 (Flattened-in Schwarzschild):}}
Proceed as in the previous example, but with an alternative definition:
$$f_i(x) = \max(i, f(x)),$$
where $f(x)$ has been extended by zero to $\R^3$.  The graph of $f_i$ is shown in figure \ref{fig_schwarz_in}. 
Upon a suitable smoothing to $\tilde f_i$, the graph of $\tilde f_i$, call it $(M_i,g_i)$, can be made to have nonnegative scalar curvature.  Moreover, for $p_i=(0,0,0,i)$, 
$(M_i,g_i,p_i)$ converges in any reasonable pointed sense to Euclidean space $(\R^3,\delta_{ij},\vec 0)$.  Moreover, each $(M_i,g_i)$ has ADM mass $m$.  This example shows that the ADM mass can indeed drop when passing to a limit.

To add a different twist to this example, scale the metric $g_i$ by a constant $c^2_i>0$, where $c_i \nearrow \infty$.  The corresponding ADM masses are scaled by $c_i$.
Then $(M_i,c_i^2 g_i,p_i)$ still converges to Euclidean space.  Moreover,
$\liminf_{i \to \infty} (M_i,c_i^2 g_i) = +\infty$, while the limit has zero mass.

\begin{figure}
\begin{center}
\includegraphics[height=1.1in]{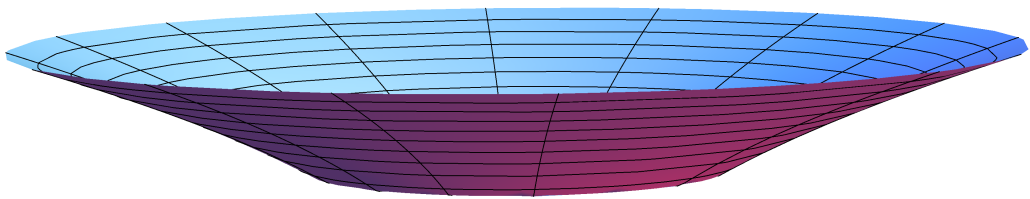}
\caption{\small An illustration of the ``flattened-in'' Schwarzschild manifold of Example 2. \label{fig_schwarz_in}}
\end{center}
\end{figure}

\vspace{2mm}
\paragraph{\emph{Example 3 (Blowing up a fixed manifold):}}
Let $(M,g)$ be an asymptotically flat $n$-manifold of nonnegative scalar curvature, with ADM mass $m \in \R$. For each positive integer $i$, let $g_i$ denote the rescaled metric $i^2 g$.  Each $g_i$ is asymptotically flat with nonnegative scalar curvature, with ADM mass $i^{n-2}m$.  In particular, the possible values of $\liminf_{i \to \infty} m_{ADM}(M,g_i)$ are $-\infty, 0,$ or $+\infty$ according to the sign of $m$.

Fix a point $p \in M$.  Using the exponential map of $g$ about $p$, composed with a scaling by $i\inv$, and the smoothness of $g$, it is straightforward to show that $(M,g_i,p)$ converges in the pointed $C^2$ Cheeger--Gromov sense (see Definition \ref{def_CG}) to Euclidean space $(\R^n,\delta,\vec 0)$. Thus, the limit space has mass $0$.  A lower semicontinuity statement would imply that $m \geq 0$. In particular, we see:
\begin{observation}
\label{obs}
The statement ``the ADM mass is lower semicontinuous on the space of asymptotically flat $n$-manifolds of nonnegative scalar curvature (that contain no compact minimal hypersurfaces), with respect to pointed $C^2$ Cheeger--Gromov convergence'' implies the positive mass theorem in dimension $n$ for such manifolds.
\end{observation}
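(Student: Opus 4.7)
The plan is to prove the contrapositive: assume the stated lower semicontinuity property holds, then deduce that every asymptotically flat $n$-manifold $(M,g)$ with nonnegative scalar curvature and no compact minimal hypersurfaces has $m_{ADM}(M,g) \geq 0$. The entire strategy, already sketched in Example~3, is to exploit the homogeneity of the ADM mass under constant rescaling by forming a sequence $(M, g_i, p) := (M, i^2 g, p)$ for an arbitrary fixed $p \in M$, and to show this sequence satisfies all hypotheses of the assumed lower semicontinuity statement while converging to Euclidean space.

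First I would verify the structural hypotheses. The conditions of being asymptotically flat, having nonnegative scalar curvature, and containing no compact minimal hypersurfaces are all invariant under constant rescaling of the metric (the scalar curvature of $i^2 g$ is $i^{-2}$ times that of $g$, and minimality is scale invariant), so each $(M, g_i)$ lies in the same class as $(M,g)$. Next I would record the standard scaling law $m_{ADM}(M, i^2 g) = i^{n-2}\, m_{ADM}(M,g)$, which follows directly from the ADM boundary integral and the behavior of the flat Laplacian under dilation.

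The main verification is that $(M, g_i, p)$ converges in the pointed $C^2$ Cheeger--Gromov sense to $(\R^n, \delta, \vec 0)$. Here I would choose a radius $r_0 > 0$ so that $\exp_p : B_{r_0}(0) \subset T_pM \to M$ is a diffeomorphism onto its image, and define embeddings $\phi_i : B_R(\vec 0) \subset \R^n \to M$ (for any fixed $R$ and for $i$ large enough that $R/i < r_0$) by $\phi_i(x) = \exp_p(x/i)$, after identifying $T_pM$ with $\R^n$ via a $g$-orthonormal basis. A direct computation, using smoothness of $g$ and the fact that $(\exp_p^* g)(0) = \delta$ with vanishing first derivatives in normal coordinates, shows $\phi_i^*(i^2 g) \to \delta$ in $C^2$ on $B_R(\vec 0)$, which is precisely the required Cheeger--Gromov convergence.

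Granted these three verifications, the assumed lower semicontinuity then yields
\[
0 \;=\; m_{ADM}(\R^n, \delta) \;\leq\; \liminf_{i \to \infty} m_{ADM}(M, i^2 g) \;=\; \liminf_{i \to \infty} i^{n-2}\, m_{ADM}(M,g).
\]
Since $n \geq 3$ so that $i^{n-2} \to +\infty$, this inequality forces $m_{ADM}(M,g) \geq 0$, which is the positive mass theorem. I do not expect a serious obstacle: the scaling of the ADM mass is classical, and the Cheeger--Gromov convergence is the standard blow-down of a smooth Riemannian manifold to its tangent space. The only content of the observation is the logical packaging, together with the crucial sign $n - 2 > 0$ that turns any putative negative mass into a $\liminf$ of $-\infty$.
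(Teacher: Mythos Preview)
Your proposal is correct and follows essentially the same argument as the paper: Example~3 already constitutes the proof of the observation, using the rescaled sequence $(M,i^2g,p)$, the scaling law $m_{ADM}(M,i^2g)=i^{n-2}m$, and convergence to $(\R^n,\delta,\vec 0)$ via the exponential map composed with scaling by $i^{-1}$. One minor terminological slip: recovering the tangent space by rescaling $g\mapsto i^2g$ is a blow-\emph{up}, not a blow-down.
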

This illustrates the depth of lower semicontinuity and in particular restricts any possible proof to the tools used in a proof of the positive mass theorem itself.\footnote{Indeed, our proof of Theorem \ref{thm_CG} uses Huisken--Ilmanen's results on inverse mean curvature flow \cite{imcf}, which are well-known to give an independent proof of the positive mass theorem.}

\vspace{2mm}
\paragraph{\emph{Example 4 (Hidden regions):}}
Let $(M,g)$ be the Schwarzschild manifold of mass $m>0$, described in Example 1. Fix $r > 2m$, and let $\Omega_r$ be the closed region between the coordinate sphere $S_r$ and the horizon.

Recall that the Schwarzschild manifold can be reflected (doubled) across its horizon to produce a smooth manifold with two asymptotically flat ends.  For a fixed $\epsilon  \in (0,m)$, the doubled Schwarzschild manifold  of mass $\epsilon$ contains a unique coordinate sphere $\Sigma$ in the doubled end
of the same area as $S_r$.

By identifying $S_r$ and $\Sigma$ as in figure \ref{fig_hidden}, it is possible to glue $\Omega_r$ into the doubled Schwarzschild manifold of mass $\epsilon$ so that the metric is Lipschitz and the scalar curvature is distributionally nonnegative at the interface.  The resulting manifold ``hides'' $\Omega_r$ inside the horizon of the doubled Schwarzschild manifold of mass $\epsilon$.
\begin{figure}
\begin{center}
\includegraphics[height=2.1in]{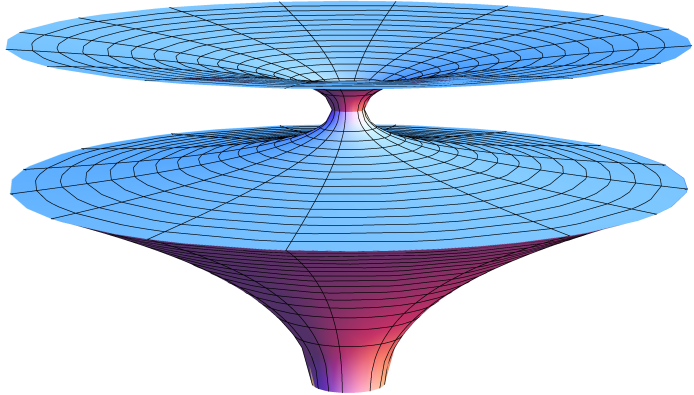}
\caption{\small An illustration of the manifold with a hidden region in Example 4. \label{fig_hidden}}
\end{center}
\end{figure}

By increasing $r$ to infinity (and keeping $\epsilon$ fixed but adjusting $\Sigma$ appropriately), we obtain a sequence of asymptotically flat manifolds of nonnegative scalar curvature each of which has mass $\epsilon$,  that converges in any reasonable pointed sense to the Schwarzschild manifold of mass $m> \epsilon$ (where the base points are chosen to be on the boundary).  Thus, the ADM mass jumps up in the limit.  This example illustrates the necessity of assuming the manifolds do not contain compact minimal surfaces in order to establish (\ref{eqn_lsc}).

\vspace{2mm}
\paragraph{\emph{Example 5 (Ricci flow of asymptotically flat manifolds):}} 
A given asymptotically flat $n$-manifold $(M,g_0)$ of nonnegative scalar curvature, $n \geq 3$, may be considered as initial data for the Ricci flow.  In the literature it has been established that there exists a solution to the Ricci flow $\{g_t\}$, $t \in [0,T)$, with initial condition $g_0$.  Asymptotic flatness and nonnegative scalar curvature are preserved under the flow, and interestingly, so is the ADM mass \cites{dai_ma,OW}.  For the class of rotationally symmetric manifolds $(M,g_0)$ of nonnegative scalar curvature and containing no compact minimal surfaces, Oliynyk and Woolgar showed that $\{g_t\}$ exists for all time ($T=+\infty$) and converges in the pointed $C^k$ Cheeger--Gromov sense to Euclidean space for every $k$ \cite{OW}.  In particular, if the initial metric has strictly positive ADM mass $m_0$, then all $g_t$ have ADM mass equal to $m_0$, and thus the mass jumps down to zero in the limit.

\vspace{2mm}
\paragraph{\emph{Example 6 (Badly behaved limit):}} Last we give an example of a sequence of asymptotically flat manifolds $M_i$ that converges in any reasonable pointed sense to a limit space that is not asymptotically flat.  In particular, the ADM mass of the limit is not even defined.  

A simple example is to append to the horizon of the Schwarzschild manifold a round cylinder $S^2 \times [0,L]$ of length $L$ and appropriate radius.  The resulting metric has $C^{1,1}$ regularity and nonnegative scalar curvature across the interface in an appropriate distributional sense.  With respect to any point on the boundary of the cylinder, letting $L \nearrow \infty$ produces a sequence of asymptotically flat manifolds that converges in any reasonable pointed sense to a half-infinite cylinder $S^2 \times [0,\infty)$, which is not asymptotically flat.

Even more extreme examples are possible, for instance by glueing small regions of nontrivial topology near a sequence of points that escapes to infinity.

\section{Lower semicontinuity for pointed Cheeger--Gromov convergence}
\label{sec_CG}
In this section we prove the lower semicontinuity of the ADM mass in dimension three with respect to pointed $C^2$ Cheeger--Gromov convergence.  The restriction $n=3$ arises primarily from the use of Huisken--Ilmanen's weakly-defined inverse mean curvature flow \cite{imcf}, and we conjecture that the analogous result holds for all dimensions $n\geq 3$.\footnote{This conjecture is not as benign as it may seen, because of Observation \ref{obs}: a proof of lower semicontinuity implies a proof of the positive mass theorem.  In higher dimensions, this remains an unresolved issue.}  Now we give some relevant definitions.

\begin{definition}
\label{def_CG}
A sequence of complete, pointed Riemannian $3$-manifolds $(M_i,g_i,p_i)$ converges in the \textbf{pointed $C^{k}$ Cheeger--Gromov sense} to a complete pointed Riemannian $3$-manifold $(N,h,q)$ if for every $r > 0$ there exists a domain $\Omega$ containing $B_r(q)$ in $(N,h)$, and there exist (for all $i$ sufficiently large) smooth embeddings $\Phi_i: \Omega \to M_i$ such that $\Phi_i(\Omega)$ contains the open $g_i$-ball of radius $r$ about $p_i$, and the metrics $\Phi_i^* g_i$ converge in $C^{k}$ to $h$ on $\Omega$.
\end{definition}
Note that no $M_i$ need be diffeomorphic to $N$ in the above definition.

\begin{definition}
\label{def_AF}
A smooth, connected, Riemannian 3-manifold $(M,g)$ (without boundary) is \textbf{asymptotically flat} (of order $\tau>\frac{1}{2}$) if 
\begin{enumerate}[(i)]
\item there exists a compact set $K \subset M$ and a diffeomorphism $\Phi: M \sm K \to \R^3 \sm B$ (where $B$ is a closed ball), and
\item in the coordinates $(x^1, x^2, x^3)$ on $M \sm K$ induced by $\Phi$, the metric obeys the decay conditions:
\begin{align*}
 \left|g_{jk} - \delta_{jk}\right| &\leq \frac{c}{|x|^\tau},&
 \left|\frac{\partial g_{jk}}{\partial x_l} \right| &\leq \frac{c}{|x|^{\tau+1}},\\
 \left|\frac{\partial^2g_{jk}}{\partial x_l \partial x_m} \right| &\leq \frac{c}{|x|^{\tau+2}},&
 |R| &\leq \frac{c}{|x|^q},
\end{align*}
for $|x|=\sqrt{(x^1)^2 + (x^2)^2 + (x^3)^2}$ sufficiently large and $j,k,l,m=1,2,3$, where $c>0$ and $q >3$ are constants,
$\delta_{ij}$ is the Kronecker delta, and $R$ is the scalar curvature of $g$.  
\end{enumerate}
Such $(x^i)$ form an \textbf{asymptotically flat coordinate system}.
\end{definition}

For such manifolds, the ADM mass \cite{adm} is well-defined \cite{bartnik_adm} by the limit
\begin{equation}
\label{eqn_ADM}
m_{ADM}(M,g) = \frac{1}{16\pi} \lim_{r \to \infty}  \sum_{i,j=1}^3\int_{S_r} \left(\frac{\partial g_{ij}}{\partial x^i}  - \frac{\partial g_{ii}}{\partial x^j} \right)\frac{x^j}{r} dA,
\end{equation}
where $(x^i)$ are asymptotically flat coordinates, $S_r$ is the coordinate sphere $\{|x|=r\}$, and $dA$ is the area form on $S_r$.

We restate Theorem \ref{thm_CG} here for the reader's convenience.
\begin{thm}
\label{thm_CG2}
Let $\{(M_i, g_i,p_i)\}$ be a sequence of asymptotically flat pointed Riemannian $3$-manifolds that converges in the pointed $C^2$ Cheeger--Gromov sense to an asymptotically flat pointed Riemannian $3$-manifold $(N,h,q)$.    Assume that for each $i$, $(M_i,g_i)$ contains no compact minimal surfaces and that $g_i$ has nonnegative scalar curvature.  Then
$$m_{ADM}(N,h) \leq \liminf m_{ADM}(M_i,g_i)  .$$
\end{thm}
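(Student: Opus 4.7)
The plan is to use Huisken--Ilmanen's weak inverse mean curvature flow (IMCF) \cite{imcf} together with Geroch monotonicity of the Hawking mass $m_H$. The idea is: choose in $(N,h)$ a surface whose Hawking mass is within $\epsilon$ of $m_{ADM}(N,h)$; transfer it to $(M_i,g_i)$ via the Cheeger--Gromov embeddings, where the Hawking mass is nearly preserved thanks to $C^2$ control; and finally apply IMCF in $(M_i,g_i)$ to bound this Hawking mass above by $m_{ADM}(M_i,g_i)$.

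Fix $\epsilon > 0$. A direct computation using the decay conditions in Definition~\ref{def_AF} shows that the Hawking mass of the coordinate sphere $S_R^N = \{|x|=R\}$ in the asymptotically flat end of $(N,h)$ converges to $m_{ADM}(N,h)$ as $R \to \infty$. Pick $R$ large enough that $m_H(S_R^N;h) \geq m_{ADM}(N,h) - \epsilon$, and choose $r$ exceeding the $h$-distance from $q$ to $S_R^N$. The pointed $C^2$ Cheeger--Gromov hypothesis then supplies, for all large $i$, smooth embeddings $\Phi_i:\Omega\to M_i$ with $S_R^N \subset \Omega$ and $\Phi_i^* g_i \to h$ in $C^2(\Omega)$. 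Setting $\tilde\Sigma_i := \Phi_i(S_R^N)$, the $C^2$ convergence forces the induced metric and mean curvature on $S_R^N$ with respect to $\Phi_i^* g_i$ to converge to those with respect to $h$, hence
$$m_H(\tilde\Sigma_i;g_i) = m_H(S_R^N;\Phi_i^*g_i) \longrightarrow m_H(S_R^N;h) \geq m_{ADM}(N,h) - \epsilon.$$

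Next, I would apply Huisken--Ilmanen's theorem in each $(M_i,g_i)$, which has nonnegative scalar curvature, is asymptotically flat, and has no compact minimal surfaces. Replacing $\tilde\Sigma_i$ by its outer-minimizing hull $\Sigma_i^*$ (which only raises the Hawking mass, by the jump formula of \cite{imcf}), and running weak IMCF from $\Sigma_i^*$, Geroch monotonicity together with the blow-down at infinity analysis of \cite{imcf} gives $m_H(\Sigma_i^*;g_i) \leq m_{ADM}(M_i,g_i)$. Concatenating,
$$m_{ADM}(N,h) - \epsilon \leq m_H(\tilde\Sigma_i;g_i) \leq m_H(\Sigma_i^*;g_i) \leq m_{ADM}(M_i,g_i)$$
for all sufficiently large $i$. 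Taking $\liminf_{i\to\infty}$ and then $\epsilon \to 0$ yields the theorem.

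The main obstacle is verifying that $\tilde\Sigma_i$ is admissible for weak IMCF in $(M_i,g_i)$: concretely, that the embedded $2$-sphere $\tilde\Sigma_i$ bounds a precompact domain in $M_i$ containing $p_i$, so that its outer-minimizing hull is well-defined and the flow moves outward toward the asymptotically flat end of $M_i$. Pointed Cheeger--Gromov convergence controls $M_i$ only inside metric balls about $p_i$, with no a priori control over the topology or geometry of $M_i$ at infinity; hence a separation argument is needed. One can exploit that each $M_i$ is asymptotically flat (so has a single end diffeomorphic to the complement of a ball in $\R^3$) together with the fact that $\tilde\Sigma_i$ lies inside $\Phi_i(\Omega)$ and, by the $C^2$ control, is geometrically close to a separating sphere in $N$. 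Once this topological separation is in hand, the remainder is a straightforward invocation of the machinery of \cite{imcf}.
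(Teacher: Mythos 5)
Your overall skeleton matches the paper's: push a large coordinate sphere $S_{r_0}$ of $(N,h)$ (with Hawking mass within $\epsilon$ of $m_{ADM}(N,h)$) into $M_i$ via the Cheeger--Gromov maps, use $C^2$ convergence to nearly preserve its Hawking mass, and then invoke Huisken--Ilmanen to bound that Hawking mass by $m_{ADM}(M_i,g_i)$. The gap is in the step where you hand the surface to the weak IMCF. Theorem \ref{thm_imcf} requires the initial surface to be area-outer-minimizing, and your substitute --- replacing $\tilde\Sigma_i$ by its outward-minimizing hull $\Sigma_i^*$ and asserting that this ``only raises the Hawking mass, by the jump formula of \cite{imcf}'' --- is not justified and is false in general. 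Passing to the hull replaces a discarded portion of $\tilde\Sigma_i$ by pieces with $H=0$ and can strictly decrease the area; if the discarded portion is itself (nearly) minimal, the Willmore term $\int H^2$ is (nearly) unchanged while $|\Sigma|$ drops, so when the Hawking mass is positive (the relevant regime here) it strictly \emph{decreases} under the hull operation. The non-decrease of $m_H$ at jumps in \cite{imcf} relies on equality of areas across the jump, which holds at positive flow times but not at the initial replacement of $\Sigma_0$ by its hull. So your chain $m_H(\tilde\Sigma_i)\le m_H(\Sigma_i^*)\le m_{ADM}(M_i,g_i)$ breaks at the first inequality.

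This is precisely the difficulty the paper spends most of its proof on: it shows that $\Sigma_i=\Phi_i(S_{r_0})$ is \emph{already} area-outer-minimizing in $(M_i,g_i)$, by analyzing the outermost minimal area enclosure $\tilde\Sigma_i$ in three cases --- a component disjoint from $\Sigma_i$ is excluded by the no-compact-minimal-surfaces hypothesis; an enclosure staying inside the chart $\Phi_i(B_{7r_0})$ must equal $\Sigma_i$ by the mean-curvature comparison with the coordinate spheres $S_r$ (which have positive mean curvature for the perturbed metrics $h_i$); and an enclosure exiting the chart is ruled out by the Colding--Minicozzi monotonicity formula, which forces such a minimal surface to have area at least roughly $9\pi r_0^2$, contradicting the upper bound of roughly $4\pi r_0^2$ coming from the minimizing property and the area of $S_{r_0}$. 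Note that pointed Cheeger--Gromov convergence gives no control on $(M_i,g_i)$ outside the chart, so without an argument of this kind the hull could a priori escape the region where you have $C^2$ control and its Hawking mass could be unrelated to $m_H(S_{r_0};h)$. By contrast, the obstacle you flag as the main one (that $\tilde\Sigma_i$ bounds a precompact region so the hull is defined) is comparatively minor --- the paper simply removes the image $\Phi_i(B_{r_0})$ of the compact region bounded by $S_{r_0}$ --- and resolving it would not close the real gap above.
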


In light of examples 1 and 4 of section \ref{sec_examples}, the last two hypotheses are necessary.

The key estimate in the proof is the following famous result of Huisken and Ilmanen \cite{imcf}, whose proof utilizes a weakly-defined inverse mean curvature flow along which the Hawking mass is non-decreasing.  Recall the Hawking mass of a hypersurface $\Sigma$ with area $A$, area form $dA$, and mean curvature $H$ in a Riemannian $n$-manifold is defined by the formula
\begin{equation}
\label{def_hawking}
m_H(\Sigma) = \frac{1}{2}\left(\frac{A}{\omega_{n-1}}\right)^{\frac{n-2}
{n-1}}
\left(1-\frac{1}{(n-1)^2}\left(\frac{1}{\omega_{n-1}} \int_\Sigma |H|^{n-1} dA\right)^{\frac{2}{n-1}}\right),
\end{equation}
where $\omega_{n-1}$ is the hypersurface area of the unit $(n-1)$-sphere in $\R^n$. The result below, while not stated explicitly in their paper, is well-known and follows from the theorems therein:

\begin{thm}[Huisken--Ilmanen \cite{imcf}]  \label{thm_imcf}
Let $(M,g)$ be an asymptotically flat 3-manifold of nonnegative scalar curvature, with connected nonempty boundary $\Sigma$.  If $\Sigma$ is \textbf{area-outer-minimizing} (i.e.,
every surface enclosing $\Sigma$ has area at least that of $\Sigma$), then
$$m_{ADM}(M,g) \geq m_H(\Sigma).$$
\end{thm}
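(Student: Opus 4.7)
The plan is to deduce Theorem \ref{thm_imcf} from three pillars of Huisken and Ilmanen's work on weak inverse mean curvature flow (IMCF): an existence/regularity theorem, the Geroch-type monotonicity of Hawking mass along the flow, and the asymptotic comparison with large coordinate spheres. First I would invoke the weak existence theorem to obtain a proper weak solution $u \colon M \to [0,\infty)$ of the IMCF equation with $u = 0$ on $\Sigma = \partial M$. Since $\Sigma$ is connected, smooth, and area-outer-minimizing, the weak flow begins from $\Sigma$ itself without an initial jump to a strictly outer minimizing hull, so $\Sigma_0 = \Sigma$ where $\Sigma_t = \{u = t\}$.

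Next I would appeal to Geroch monotonicity in its weak form. Along the level sets $\Sigma_t$, the outer-minimizing property is preserved by the weak flow, and nonnegative scalar curvature combined with the Gauss--Bonnet theorem (using the connectedness of $\Sigma_t$, which follows from the connectedness of $\Sigma$ via the Huisken--Ilmanen topological argument for almost every $t$) yields
$$ m_H(\Sigma) = m_H(\Sigma_0) \leq m_H(\Sigma_t), \qquad t \geq 0.$$
Thus, to conclude, it suffices to show $\limsup_{t \to \infty} m_H(\Sigma_t) \leq m_{ADM}(M,g)$.

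For this last step I would carry out the blow-down analysis of Huisken--Ilmanen: since $|\Sigma_t| = e^t |\Sigma_0|$ under IMCF, the level sets eventually live in the asymptotic end, and after the natural rescaling they converge to round spheres in the asymptotic chart. A direct computation using the asymptotic flatness decay conditions on $g_{ij}$ from Definition \ref{def_AF} then shows that the Hawking mass of such near-spherical large surfaces tends to $m_{ADM}(M,g)$, and concatenating the three inequalities completes the proof.

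The main obstacle is precisely this final asymptotic step: the existence and monotonicity are relatively direct applications of the weak formulation, but controlling the geometry of $\Sigma_t$ in the asymptotic region---and quantifying the error incurred when approximating $\Sigma_t$ by a coordinate sphere---requires the delicate round-sphere approximation of Huisken--Ilmanen, which appeals to their $C^{1,\alpha}$ regularity theory and the weak-$*$ compactness of the rescaled level-set functions.
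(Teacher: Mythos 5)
The paper does not actually prove this theorem---it is quoted as a known consequence of Huisken--Ilmanen's results---and your outline (existence of a proper weak IMCF solution starting at $\Sigma$, Geroch monotonicity of the Hawking mass along the connected level sets, and the blow-down comparison showing $\lim_{t\to\infty} m_H(\Sigma_t) \leq m_{ADM}(M,g)$) is precisely the standard derivation from \cite{imcf} that the paper has in mind. One small correction: being area-outer-minimizing (non-strictly) does not by itself rule out an initial jump to the strictly minimizing hull, but this is harmless, since the jump preserves area and does not increase $\int_\Sigma H^2$, which is exactly why Huisken--Ilmanen's monotonicity theorem includes the minimizing-hull hypothesis and applies from $t=0$; your conclusion is unaffected.
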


\begin{proof}[Proof of Theorem \ref{thm_CG}]
Fix some parameter $\eta_0 \in (0, 0.01)$, which will be used for estimating the error in geometric quantities with respect to different metrics.  Let $\epsilon>0$. 
Fix an asymptotically flat coordinate system $(x^1, x^2, x^3)$ on $(N,h)$ defined for $r \geq r_0$, where $r=\sqrt{(x^1)^2 + (x^2)^2 + (x^3)^2}$.  Let $S_r$ be the coordinate sphere of radius $r$, and let $B_r$ denote the closure of the compact region in $N$ that is bounded by $S_r$.  Note that the coordinate system naturally induces a Euclidean metric $\delta$ on $N \sm B_{r_0}$.

By asymptotic flatness, we may increase $r_0$ if necessary to guarantee:
\begin{enumerate}[(i)]
\item $m_{ADM}(N,h) - m_H(S_r) \leq \frac{\epsilon}{2}$ for all $r \geq r_0$.\footnote{This is possible because $m_{ADM}(N,h) = \lim_{r \to \infty} m_H(S_r)$.  This equality is well-known and follows from results in \cite{FST}, for instance.}
\item $S_r$ has positive mean curvature in $(N,h)$ for all $r \geq r_0$.
\item Distances in $N \sm B_{r_0}$ with respect to $h$ and $\delta$ differ by a factor of at most $1+\eta_0$.
\item Areas of surfaces in $N \sm B_{r_0}$ with respect to $h$ and $\delta$ differ by a factor of at most $1+\eta_0$.
\end{enumerate}
From asymptotic flatness, the sectional curvature $\kappa$ of $(N,h)$ is of order $O(r^{-2-\tau})$.  In particular, there exists a constant $\kappa_0>0$ such that $|\kappa| \leq \kappa_0r^{-2-\tau}$ on $N\sm B_r$, for $r \geq r_0$.
Thus, we may increase $r_0$ if necessary so as to assume that 
\begin{enumerate}
\item[(v)] $|\kappa|$ is bounded above on $N \sm B_{r_0}$ by some constant $\kappa_1$, where $e^{6\sqrt{\kappa_1} r_0} \leq 1 + \eta_0$.
\end{enumerate}
Moreover, by considering the rescaled manifolds $(N\sm B_r, r^{-2} h)$ for $r$ large (which converge in an appropriate $C^2$ sense to Euclidean space minus a ball), we see that one may choose $r_0$ large enough so that:
\begin{enumerate}
\item[(vi)] Any point in $S_{4r_0}$ has injectivity radius (with respect to $h$) at least $\frac{3r_0}{1+\eta_0}$.
\end{enumerate}

Choose $R>0$ large so that the ball of radius $R$ about $q$ in $(N,h)$ contains $B_{7r_0}$.  Use the definition of pointed Cheeger--Gromov convergence of $(M_i,g_i,p_i)$ to $(N,h,q)$ to obtain appropriate smooth embeddings $\Phi_i$ of a superset of $B_R(q)$ into $(M_i,g_i)$ for $i$ sufficiently large, then restrict to $\Phi_i: B_{7r_0} \to M_i$.  Then $h_i:=\Phi_i^*g_i$ converges in $C^2$ to $h$ on $B_{7r_0}$.  Thus, in conjunction with (ii)-(vi) above, there exists $i_0 > 1$ so that for all $i \geq i_0$,
\begin{enumerate}
\item [(ii$'$)] Each $S_r$, for $r \in [r_0, 7r_0]$, has positive mean curvature with respect to every $h_i$.
\item [(iii$'$)] Distances in $B_{7r_0}$ with respect to $h$ and $h_i$ differ by a factor of at most $1+\eta_0$.
\item [(iv$'$)] Areas of surfaces in $B_{7r_0}$ with respect to $h$ and $h_i$ differ by a factor of at most $1+\eta_0$.
\item [(v$'$)] The sectional curvature of $h_i$ on $B_{7r_0} \sm B_{r_0}$ is bounded above by some
constant $\kappa_2$, where $e^{6\sqrt{\kappa_2} r_0} \leq (1 + \eta_0)^2$.
\item [(vi$'$)] Any point in $S_{4r_0}$ has injectivity radius (with respect to $h_i$) at least $\frac{3r_0}{(1+\eta_0)^2}$.
\end{enumerate}

With the aim of eventually applying Theorem \ref{thm_imcf}, we claim that for $i\geq i_0$, $\Sigma_i:=\Phi_i(S_{r_0})$ is area-outer-minimizing in $(M_i,g_i)$. Let $\tilde \Sigma_i$ be the outermost minimal area enclosure of $\Sigma_i$ in $(M_i,g_i)$. \footnote{That is, $\tilde \Sigma_i$ encloses $\Sigma_i$, has the least area among all surfaces enclosing $\Sigma_i$, and is the outermost such surface.  Existence of $\tilde \Sigma_i$ follows from asymptotic flatness and standard geometric measure theory arguments.  Standard regularity results imply that $\tilde \Sigma_i$ is a $C^{1,1}$ closed, embedded surface and that $\tilde \Sigma_i \sm \Sigma_i$, if nonempty, is a smooth minimal surface (cf. Theorem 1.3 of \cite{imcf}).}
\vspace{2mm}
\paragraph{\emph{Case 1:}} $\tilde \Sigma_i$ has a connected component disjoint from $\Sigma_i$.  Then $(M_i,g_i)$ contains a compact minimal surface, contrary to the hypothesis.
\vspace{2mm}
\paragraph{\emph{Case 2:}} $\tilde \Sigma_i$ is contained in $\Phi_i(B_{7r_0})$.  Since $\Phi_i:(B_{7r_0}, h_i) \to (\Phi_i(B_{7r_0}),g_i)$ is an isometry, we see $\Phi_i\inv(\tilde \Sigma_i)$ is a minimal surface in $(B_{7r_0},h_i)$. Moreover, $\Phi_i\inv(\tilde \Sigma_i)$ is contained in $B_{7r_0} \sm \interior(B_{r_0})$ because $\tilde \Sigma_i$ encloses $\Sigma_i$.
Let $r_{\text{max}} \in [r_0, 7r_0]$ denote the maximum value of the function $r$ restricted to 
$\Phi_i\inv(\tilde \Sigma_i)$.  If $r_{\text{max}} > r_0$, then $S_{r_{\text{max}}}$ encloses 
$\Phi_i\inv(\tilde \Sigma_i)$ and moreover these surfaces share a tangent plane at some point. 
 This contradicts the comparison principle for mean curvature: $S_{r_{\text{max}}}$ has positive mean curvature with respect to $h_i$ (by (ii$'$)) yet encloses the minimal surface $\Phi_i\inv(\tilde \Sigma_i)$ to which it is tangent.  Thus,
   $r_{\text{max}} = r_0$, which implies $\tilde \Sigma_i = \Sigma$.  It follows that $\Sigma_i$
   is area-outer-minimizing, which was claimed.
\vspace{2mm}
\paragraph{\emph{Case 3:}} If neither case 1 nor case 2 holds, then $\tilde \Sigma_i$ is connected and must intersect $\Sigma_i$ but contain a point outside of $\Phi_i(B_{7r_0})$.  By continuity there exists a point $a \in  \tilde \Sigma_i \cap \Phi_i(S_{4r_0})$.

Recall the monotonicity formula of Colding and Minicozzi (equation (5.5) of \cite{CM}) regarding minimal surfaces in a 3-manifold of bounded sectional curvature.  Suppose a point $x_0$ belongs to a smooth minimal surface $S$ embedded in a Riemannian 3-manifold.  Suppose the sectional curvatures of the 3-manifold are bounded in absolute value by a constant $k$, and the injectivity radius at $x_0$ is $i_0$.  Then for all $t \in \left(0, \min\left(i_0, \frac{1}{\sqrt{k}},\dist(x_0,\partial S\right)\right)$, the monotonicity formula states
$$\frac{d}{dt} \Theta(t) \geq 0, \qquad\qquad \text{where}\;\;\Theta(t)= \frac{e^{2\sqrt{k} t} |B_{t}(x_0) \cap S|}{\pi t^2},$$
where the area $|\cdot|$ and ball $B_{t}(x_0)$ are taken with respect to the Riemannian metric on the 3-manifold.  By the smoothness of $S$ it is clear that $\lim_{t \to 0^+} \Theta(t) = 1$, so that $\Theta(t) \geq 1$ for the allowable values of $t$.  We apply this formula to the minimal surface
$S= \Phi_i\inv(\tilde \Sigma_i \cap \Phi_i(B_{7r_0}))\sm S_{r_0}$  in $(B_{7r_0},h_i)$, with $x_0=\Phi_i\inv(a)$ and $k= \kappa_2$.

We first determine a large value of $t$ for which the monotonicity formula is valid.  First, note that the distance from $x_0$ to the boundary of $B_{7r_0} \sm B_{r_0}$ with respect to $h_i$
is bounded above by $\frac{1}{(1+\eta_0)^2}$ times this distance in the Euclidean metric, which is $3r_0$.  Here, we have used (iii) and (iii$'$).  By (vi$'$), the injectivity radius of $h_i$ at $x_0$ is at least $\frac{3r_0}{(1+\eta_0)^2}$.  Finally, by (v$'$), it follows that
$$\frac{1}{\sqrt{\kappa_2}} \geq \frac{3r_0}{\eta_0} \geq \frac{3r_0}{(1+\eta_0)^2}.$$
Thus, the monotonicity formula holds for $t \in \left(0, \frac{3r_0}{(1+\eta_0)^2}\right)$.  We choose $t = \frac{3r_0}{(1+\eta_0)^3}$.  Then
\begin{align*}
1 &\leq \Theta\left(\frac{3r_0}{(1+\eta_0)^3}\right)\\
&= \frac{e^{2\sqrt{\kappa_2} \frac{3r_0}{(1+\eta_0)^3}} |B_{t}(x_0) \cap S|_{h_i}}{\pi \left(\frac{3r_0}{(1+\eta_0)^3}\right)^2}\\
&\leq (1+\eta_0)^6\left(\frac{e^{6\sqrt{\kappa_2} r_0} |S|_{h_i}}{9\pi r_0^2 }\right)\\
&\leq (1+\eta_0)^8\left(\frac{|S|_{h_i}}{9\pi r_0^2 }\right),
\end{align*}
having used (v$'$) in the last step. Since $|\eta_0|<0.01$ by hypothesis,
$$|S|_{h_i} \geq \frac{9\pi r_0^2}{(1+\eta_0)^8} \geq 8.3 \pi r_0^2.$$

On the other hand, 
\begin{equation}
\label{ineq_chain}
|S|_{h_i} \leq |\tilde \Sigma_i|_{g_i} \leq |\Sigma_i|_{g_i}= |\Phi(S_{r_0})|_{g_i} = |S_{r_0}|_{h_i}.
\end{equation}
The first inequality holds because $S$ (with metric induced by $h_i$) is isometric to a subset to $\tilde \Sigma_i$ (with metric induced by $g_i$), and the second by the definition of $\tilde \Sigma_i$.   Finally, by (iv) and (iv'), the area of $S_{r_0}$ with respect to $h_i$ is bounded above by $(1+\eta_0)^2|S_{r_0}|_{\delta} = 4\pi (1+\eta_0)^2 r_0^2 \leq 4.1\pi r_0^2$. 
Together with (\ref{ineq_chain}), it follows that $|S|_{h_i} \leq 4.1\pi r_0^2$, a contradiction, so that case 3 does not occur.

\vspace{2mm}

Consideration of the above three cases establishes the claim that $\Sigma_i=\Phi_i(S_{r_0})$ is area-outer-minimizing in $(M_i,g_i)$ for $i \geq i_0$.  By Theorem \ref{thm_imcf} applied to the manifold-with-boundary obtained by removing the interior of $\Phi_i(B_{r_0})$ from $(M_i, g_i)$, we conclude
\begin{equation}
\label{estimate_mass}
m_{ADM}(M_i,g_i) \geq m_H^{(i)}(\Sigma_i)
\end{equation}
for $i \geq i_0$, where $m_H^{(i)}(\cdot)$ is the Hawking mass computed in $(M_i,g_i)$.

Completing the argument is now straightforward: by the $C^2$ convergence of $h_i$ to $h$, the Hawking mass of $S_{r_0}$ with respect to $h_i$ converges to $m_H(S_{r_0})$ as $i \to \infty$.  
Since $\Phi_i$ is an isometry, the former is equal to $m_H^{(i)}(\Sigma_i)$.  Thus, we may increase $i_0$ to ensure that
\begin{equation}
\label{estimate_hawking_masses}
|m_H^{(i)}(\Sigma_i)-m_H(S_{r_0})|< \frac{\epsilon}{2}
\end{equation}
for $i \geq i_0$.  Finally, for $i \geq i_0$,
\begin{align*}
m_{ADM}(N,h)  &\leq  m_H(S_{r_0})+ \frac{\epsilon}{2} &&\text{by (i)}\\
&\leq  m_H^{(i)}(\Sigma_i)+ \epsilon&&\text{by (\ref{estimate_hawking_masses})}\\
&\leq m_{ADM}(M_i,g_i)+ \epsilon && \text{by (\ref{estimate_mass})}
\end{align*}
Since $\epsilon$ was arbitrary, we may take $\liminf_{i \to \infty}$ to complete the proof.
\end{proof}

\section{Lower semicontinuity in rotational symmetry: weak convergence}
\label{sec_rot_sym}
Now we transition the discussion to rotationally symmetric manifolds and a natural notion of weak convergence. Subsections \ref{subsec_rot_sym}, \ref{subsec_embedding}, and \ref{subsec_weak} give the preliminaries for precisely stating and proving Theorem \ref{thm_main} in subsection \ref{subsec_proof}.

\subsection{Rotationally symmetric manifolds}
\label{subsec_rot_sym}
We consider rotationally symmetric, smooth Riemannian $n$-manifolds $(M,g)$, where $g$ is of the form
\begin{equation}
\label{eqn_metric_form}
g = ds^2 + h(s)^2 g_{S^{n-1}}.
\end{equation}
Here, $g_{S^{n-1}}$ is the standard metric on the unit $(n-1)$-sphere and $h:[0,\infty) \to [0,\infty)$ is a smooth function.  Note that $s$ is the distance from the boundary (if nonempty, so that $h(0)>0$ ) or the pole (if the boundary is empty, so that $h(0)=0$ and further $h'(0)=1$ by smoothness).  If $h(0)>0$, we require that $h'(0)=0$, which is equivalent to stating that the boundary sphere is a minimal hypersurface.  We further assume that $h'(s)>0$ for $s > 0$ and that $h(s)$ limits to infinity as $s \to \infty$. In geometric terms, these conditions mean that the $s=$ constant hyperspheres $S_s$ have positive mean curvature for $s>0$ and that their areas grow arbitrarily large as $s \to \infty$.  Finally, we assume that $g$ has nonnegative scalar curvature.  We denote by $\RotSym$ the class of Riemannian $n$-manifolds satisfying these conditions (borrowing notation from similar classes in \cite{lee_sormani} and \cite{lefloch_sormani}).

Recall the definition of the Hawking mass, formula (\ref{def_hawking}).  A well-known fact is the monotonicity of the Hawking mass: if $(M,g) \in \RotSym$, then the function $s \mapsto m_H(S_s)$ is monotone non-decreasing.  Thus, the following limit is well-defined (possibly $+\infty$):
\begin{equation}
\label{def_adm_limit}
m_{ADM}(M,g) = \lim_{s \to \infty} m_H(S_s).
\end{equation}
In the case that $(M,g)$ is asymptotically flat (see Definition \ref{def_AF}), the above limit agrees with the usual definition of the ADM mass (equation (\ref{eqn_ADM})).  Direct computation shows that
\begin{equation}
\label{eqn_hawking_metric}
m_H(S_s) = \frac{1}{2}h(s)^{n-2}\left(1-\left(\frac{dh}{ds}\right)^2\right).
\end{equation}
Since $\displaystyle\lim_{s \to 0^+} m_H(S_s) \geq 0$, and $m_H(\Sigma_s)$ is non-decreasing, we see $m_{ADM}(M,g) \geq 0$.

\subsection{Euclidean embedding}
\label{subsec_embedding}
A remarkable fact, exploited in \cite{lee_sormani} for example, is that any $(M,g) \in \RotSym$ may be realized isometrically as a graphical hypersurface in $\R^{n+1}$.  That is, there exists a subset $\Omega$ of $\R^n$ (equal to $\R^n$ if $M$ has no boundary and equal to $\R^n$ minus an open round ball about the origin if $M$ has a boundary) and a continuous function $f: \Omega \to \R$, smooth on the interior of $\Omega$, such that
$$\graph(f) = \{(\vec x, f(\vec x)) \in \R^{n+1} \; | \; \vec x \in \Omega\}$$
is isometric to $(M,g)$.   By symmetry, we may regard $f:[a,\infty) \to \R$ as a radial function $f(r)$, where $r$ denotes the Euclidean distance to the origin in $\R^n$.  Note that $a=h(0)$, the radius of the boundary sphere.  We call $f(r)$ a graphical representation of $(M,g)$; note that adding a constant to $f$ changes the embedding but not the induced metric.

An explicit formula for $f$ in terms of the metric of the form (\ref{eqn_metric_form}) is given as follows.  Since $h'(s)>0$, there exists an inverse function to $r=h(s)$, say $s=h\inv(r)$.  Since $\lim_{s \to 0^+} m_H(S_s) \geq 0$, and $m_H(S_s)$ is non-decreasing, (\ref{eqn_hawking_metric}) shows that $\left|\frac{dh}{ds}\right| \leq 1$.  Thus, the inverse function satisfies $\left|\frac{dh\inv(r)}{dr}\right| \geq 1$.  Direct computation shows that
\begin{equation}
\label{eqn_graph_function}
f(r)= \int^r_{h(0)} \left(\sqrt{\left(\frac{dh\inv(r)}{dr}\right)^2-1}\right)dr+K,
\end{equation}
where $K$ is any real constant.

We let $\Sigma_r$ denote the surface in $\graph(f)$ lying above the $r=$ constant coordinate sphere in $\R^n$.  Direct computation using (\ref{eqn_hawking_metric}) and (\ref{eqn_graph_function}) shows
\begin{equation}
\label{eqn_hawking_coordinate}
m_H(\Sigma_r) = \frac{1}{2}r^{n-2} \frac{f'(r)^2}{1+f'(r)^2}.
\end{equation}
We orient $\graph(f)$ as a hypersurface in $\R^{n+1}$ by choosing its normal to have positive dot product with $(0,\ldots,0,1)$.

\subsection{Weak convergence}
\label{subsec_weak}
We briefly recall weak convergence (in the sense of currents).  Let $\{S_i\}$ denote a sequence of oriented Lipschitz hypersurfaces in $\R^{n+1}$.  We may regard each such surface as a current, i.e. a functional on the space of compactly supported differential $n$-forms $\varphi$ in $\R^{n+1}$, by defining
$$S_i(\varphi) = \int_{S_i} \varphi.$$
We say $\{S_i\}$ converges \emph{weakly} to an oriented Lipschitz hypersurface $S$ if for all $\varphi$ as above, we have
$$\lim_{i \to \infty} S_i (\varphi) = S(\varphi).$$

\begin{remark}
In \cite{huang_lee}, Huang and Lee proved a stability result for the 
positive mass theorem, for the case of graphical hypersurfaces in $\R^{n+1}$ with respect to weak convergence.  
\end{remark}

\subsection{Lower semicontinuity in rotational symmetry for weak convergence}
\label{subsec_proof}

Below we prove the following theorem, which is a version of Theorem \ref{thm_intro} from the introduction that is stated more precisely.

\begin{thm}
\label{thm_main}
Let $\{f_i\}_{i=1}^\infty$ denote a sequence of graphical representatives for a sequence $\{(M_i,g_i)\}_{i=1}^\infty$ in $\RotSymm_n$.  Suppose $\{\graph(f_i)\}$ converges weakly to some nonempty Lipschitz hypersurface $N$ in $\R^{n+1}$, with induced metric $g$.  If the ADM mass of $N$ is defined, then
\begin{equation}
\label{eqn_lsc2}
m_{ADM}(N,h) \leq \liminf_{i \to \infty} m_{ADM}(M_i,g_i).
\end{equation}
\end{thm}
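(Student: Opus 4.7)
The strategy exploits, for each $(M_i,g_i) \in \RotSymm_n$, that $r \mapsto m_H^{(i)}(\Sigma_r)$ is monotone non-decreasing and satisfies $m_H^{(i)}(\Sigma_r) \to m_{ADM}(M_i,g_i)$ as $r \to \infty$, by (\ref{def_adm_limit}). Fixing a large radius $r_0$, the monotonicity gives $m_H^{(i)}(\Sigma_{r_0}) \leq m_{ADM}(M_i,g_i)$, while (\ref{def_adm_limit}) applied to $N$ gives $m_{ADM}(N,h) \leq m_H^N(\Sigma_{r_0}) + \epsilon$ once $r_0$ is sufficiently large. Thus the theorem reduces to proving the key lower-semicontinuity estimate
\[
m_H^N(\Sigma_{r_0}) \leq \liminf_{i \to \infty} m_H^{(i)}(\Sigma_{r_0})
\]
at sufficiently large $r_0$, and then sending $\epsilon \to 0$.

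First I would identify the weak limit. Each $f_i$ is monotone non-decreasing by (\ref{eqn_graph_function}), so after normalizing the constants $K$ in (\ref{eqn_graph_function}) (which do not affect the induced metric), Helly's selection theorem produces a subsequence with $f_i \to f$ pointwise almost everywhere for some monotone non-decreasing $f$. Rotational symmetry together with the hypothesis that $N$ is Lipschitz forces $N = \graph(f)$ for a Lipschitz function $f$ on $[a_N,\infty)$, where the inner radii satisfy $a_i \to a_N$.

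The principal obstacle is the key estimate itself. The map $u \mapsto u^2/(1+u^2)$ appearing in (\ref{eqn_hawking_coordinate}) is neither convex nor concave in $u$, so weak convergence of $f_i'$ as measures does not immediately yield lower semicontinuity of the Hawking mass at a fixed $r_0$. To circumvent this I would use the integral representation
\[
m_H(\Sigma_{r_0}) = \frac{1}{2}\, a^{n-2} + \frac{1}{2(n-1)} \int_a^{r_0} R(r)\, r^{n-1}\, dr,
\]
derived by integrating the identity $\frac{d m_H(\Sigma_s)}{ds} = \frac{h'(s)\, h(s)^{n-1}\, R(s)}{2(n-1)}$ in $s$ and changing variables to $r = h(s)$. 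Since $R \geq 0$, the Hawking mass is expressed as an integral of a nonnegative measure, reducing the key estimate to weak lower semicontinuity of the scalar curvature measures $R_i(r)\, r^{n-1}\, dr$ on $[0, r_0]$.

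The hardest part is rigorously establishing this weak lower semicontinuity. For graphs in $\R^{n+1}$ the quantity $R\sqrt{1+|\nabla f|^2}$ admits a divergence-form expression (the identity behind Lam's formula for the ADM mass of graphs), which makes it amenable to passage to the limit under weak convergence of currents. To move between integrated and pointwise statements, I would average over a small interval in $r_0$ and invoke Lebesgue differentiation at a full-measure set of ``good'' $r_0$, leveraging the uniform decay $f_i'(r) \to 0$ as $r \to \infty$ (a consequence of the a priori bound $\liminf_i m_{ADM}(M_i,g_i) < \infty$; the theorem is trivial otherwise). Once this is in place, combining $m_H^N(\Sigma_{r_0}) \leq \liminf_i m_H^{(i)}(\Sigma_{r_0}) \leq \liminf_i m_{ADM}(M_i,g_i)$ and sending $r_0 \to \infty$ completes the proof.
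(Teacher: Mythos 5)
Your overall skeleton (monotone Hawking masses exhaust the ADM mass on both sides, so it suffices to compare Hawking masses at a fixed large radius) is the same as the paper's, but two steps in your reduction have genuine gaps. First, the identification of the limit: you cannot renormalize the additive constants $K$ in (\ref{eqn_graph_function}), because the hypothesis is that the \emph{given} graphs $\graph(f_i)$ converge weakly to the \emph{fixed} current $N$; shifting the $f_i$ changes the weak limit (the paper stresses exactly this point, and Example 6 shows the limit can be a half-infinite cylinder, a cylinder attached to a Schwarzschild end, or empty depending on those constants). Moreover, ``rotational symmetry plus $N$ Lipschitz forces $N=\graph(f)$'' is false: a vertical cylinder $\{|\vec x|=r_*\}$ is a smooth rotationally symmetric hypersurface that is not a graph, and this case genuinely occurs (when $\liminf_i f_i(r)=+\infty$ for $r$ beyond some finite $r_*$). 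The paper must treat this blow-up case separately, using the forced blow-up of $f_i'$ on $[r_*/c,cr_*]$ to show $\liminf_i m_{ADM}(M_i,g_i)\geq c^{-(n-2)}\tfrac12 r_*^{n-2}\geq c^{-(n-2)}m_{ADM}(N,h)$; and in the graphical case it must \emph{prove} (using weak convergence against explicit test forms, not Helly) that the actual $f_i$ converge locally uniformly, ruling out drift to $-\infty$ and oscillation of $f_i(a)$.

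Second, your ``key estimate'' is not established, and the route you sketch is essentially circular. Writing $m_H(\Sigma_{r_0})$ as $\tfrac12 a^{n-2}+\tfrac{1}{2(n-1)}\int_a^{r_0}R\,r^{n-1}dr$ is a correct identity for the smooth $(M_i,g_i)$, but the integral of the scalar-curvature measure over $[a,r_0]$ \emph{is} the difference of the boundary terms $\tfrac12 r^{n-2}f_i'(r)^2/(1+f_i'(r)^2)$, i.e. exactly the derivative-dependent quantities you are trying to control; weak convergence of currents gives no control on first derivatives, so ``passage to the limit under weak convergence'' of the divergence-form expression does not close the argument (and for the merely Lipschitz limit $N$ you would additionally need a distributional scalar curvature and the representation formula for it, which you have not supplied). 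The paper's substitute is its Lemma \ref{lemma_derivs_converge}: using only the monotonicity of $r\mapsto r^{n-2}f_i'(r)^2/(1+f_i'(r)^2)$ (i.e. nonnegative scalar curvature) together with pointwise convergence $f_i\to f$, it shows $f_i'(r_0)\to f'(r_0)$ at every point of differentiability of $f$, by propagating a putative deviation $f_i'(r_0)\geq f'(r_0)+3\epsilon$ forward on a small interval and integrating to contradict differentiability. This is the analytic heart of the proof (the paper notes it fails without the scalar curvature hypothesis), and it is precisely the step your proposal leaves as a sketch. It also yields, as a byproduct, that the Hawking mass function of the limit is non-decreasing, which you implicitly use when you write $m_{ADM}(N,h)\leq m_H^N(\Sigma_{r_0})+\epsilon$ and when you treat the case $m_{ADM}(N,h)=+\infty$ (which your proposal does not address).
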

We emphasize that the additive constants of $f_i$ play a role in determining the limit space, in the same way that the choice of base points affects pointed convergence.  For instance, in example 6 in section \ref{sec_examples}, one can shift the spaces up and down in $\R^4$ to arrange that the limit space is a) a half-infinite cylinder, b) an infinite cylinder, c) a half-infinite cylinder attached to a Schwarzschild space, or d) empty.

Some of the delicate points in the proof include dealing with portions of the graphs running off to infinity, and the formation of cylindrical ends (as in example 6).  We outline the proof as follows:
\begin{itemize}
\item If the graph functions $f_i$ blow up at a fixed radius as $i \to \infty$, show that $N$ is contained inside a solid cylinder in $\R^{n+1}$.  Use the blowing up of the derivatives $f_i'$ near the cylinder boundary to establish (\ref{eqn_lsc2}).
\item Otherwise, show there is some radius $a_0$ beyond which all graph functions $f_i$ converge uniformly to some limit, $f$.
\item Argue that by virtue of the nonnegativity of scalar curvatures, $f_i'$ converges to $f'$ almost everywhere.  Also show $\graph(f)$ is contained in $N$, and that the ADM mass of $N$ is defined.
\item Use the monotonicity of the Hawking masses to establish lower semicontinuity in the separate cases in which the limit has either finite or infinite ADM mass.
\end{itemize}

\begin{proof}
Let $m_i \geq 0$ denote the ADM mass of $(M_i,g_i)$. 
First, pass to a subsequence of $\{(M_i, g_i)\}$ (denoted the same) for which the ADM masses limit to the $\liminf$ of the ADM masses of the original sequence.  
This allows us to pass to further subsequences without loss of generality.

\vspace{2mm}
\paragraph{\emph{Part 0:}}
Clearly we may assume $\displaystyle\liminf_{i \to \infty} m_i$ is finite (or else (\ref{eqn_lsc2}) is trivial), so that there exists an upper bound $C>0$ of $\{m_i\}$.  By the monotonicity of the Hawking mass (\ref{eqn_hawking_metric}) in each $(M_i,g_i)$ we have
\begin{equation}
\label{eqn_penrose}
C \geq m_i \geq m_H(S_0) =\frac{1}{2} h_i(0)^{n-2},
\end{equation}
where $h_i$ is the profile function (\ref{eqn_metric_form}) associated to $(M_i,g_i)$.  Thus $h_i(0) \leq (2C)^{\frac{1}{n-2}}< (4C)^{\frac{1}{n-2}}=:a_0$ is bounded above, independently of $i$.
It follows that there exists a single interval $[a_0, \infty)$ on which all $f_i$ are defined, for $i$ sufficiently large.  From here on, we assume $i$ is such.

\paragraph{\emph{Part 1:}}
We first consider the case in which there exists a number $r\geq a_0$ for which $\liminf_{i \to \infty} f_i(r) = +\infty$.  Fix $r_*\geq a_0$ as the infimum of such values of $r$.   
Since each $f_i$ is non-decreasing, we have $\liminf_{i \to \infty} f_i(r) = +\infty$ for all $r>r_*$.
Let $\Omega$ be the solid, closed cylinder of radius $r_*$ in $\R^{n+1}$ about the $x_{n+1}$ axis.
From the definition of weak convergence, it is clear $N \subset \Omega$.

If $r_* = 0$, then $N$ is not a Lipschitz hypersurface, a contradiction. Thus, assume $r_* > 0$. If the ADM mass of $(N,h)$ is defined, then it is bounded above by $\frac{1}{2}r_*^{n-2}$ by virtue of formulas (\ref{def_adm_limit}) and (\ref{eqn_hawking_metric}).  Now fix any number $c>1$.  By the definition of $r_*$, we have $\liminf_{i \to \infty} f_i(r_*/c) < +\infty$ and $\liminf_{i \to \infty} f_i(cr_*) = +\infty$.  Pass to a subsequence (again, using the same notation) for which $\{f_i(r_*/c)\}$ is uniformly bounded above.  By the mean value theorem, there exists $c_i \in [r_*/c, cr_*]$ for which $\lim_{i \to \infty} f_i'(c_i) = + \infty$.  

Let $m_H^{(i)}(\cdot)$ denote the Hawking mass with respect to $(M_i,g_i)$; recall that on any rotationally symmetric hypersphere, $m_H^{(i)}$ provides a lower bound of $m_{ADM}(M_i,g_i)$. Then
\begin{align*}
m_{ADM}(N,h) &\leq \frac{1}{2} r_*^{n-2}\\
&= \lim_{i \to \infty} \frac{1}{2} r_*^{n-2} \frac{f_i'(c_i)^2}{1+f_i'(c_i)^2}\\
&\leq c^{n-2} \liminf_{i \to \infty} \frac{1}{2} c_i^{n-2} \frac{f_i'(c_i)^2}{1+f_i'(c_i)^2}\\
&= c^{n-2} \liminf_{i \to \infty} m_H^{(i)} (\Sigma_{c_i})\\
&\leq c^{n-2} \liminf_{i \to \infty} m_{ADM} (M_i,g_i).
\end{align*}
Since $c>1$ was arbitrary, the proof is complete for this case.

\vspace{2mm}
\paragraph{\emph{Part 2:}}For the rest of this proof, we may now assume
\begin{equation}
\label{liminf_finite}
\liminf_{i \to \infty} f_i(r) < + \infty \text{ for every } r \geq a_0.
\end{equation}

By the monotonicity of the Hawking mass (\ref{eqn_hawking_coordinate}) in $(M_i,g_i)$, we have for each $r\geq a_0$,
$$\frac{r^{n-2}f_i'(r)^2}{2(1+f_i'(r)^2)} \leq m_i \leq C.$$
Since $a_0=(4C)^{\frac{1}{n-2}}$,  we have
\begin{equation}
\label{eqn_f_i_prime_bound}
0 \leq f_i'(r) \leq \sqrt{\frac{2C}{r^{n-2}-2C}} \leq 1,
\end{equation}
for $r \geq a_0$.

\begin{lemma}
\label{lemma_uniform_convergence}
The sequence $\{f_i\}$ of functions restricted to $[a_0, \infty)$ converges uniformly on compact sets to a continuous function $f:[a_0,\infty) \to \R$.
\end{lemma}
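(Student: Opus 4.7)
The plan is a direct application of the Arzelà--Ascoli theorem, using two ingredients established in the preceding setup. First, the derivative bound (\ref{eqn_f_i_prime_bound}) gives $0 \leq f_i'(r) \leq 1$ for all $r \geq a_0$, so each $f_i$ is non-decreasing and $1$-Lipschitz on $[a_0, \infty)$, satisfying $|f_i(r) - f_i(s)| \leq |r - s|$. This immediately yields equicontinuity of the family $\{f_i\}$. It is worth noting that this bound depends on $a_0 = (4C)^{1/(n-2)}$ being uniformly bounded away from the potential singularity at $r^{n-2} = 2C$ in (\ref{eqn_f_i_prime_bound}), which is exactly what Part 0 arranged.

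Second, I would obtain pointwise control at the left endpoint. By (\ref{liminf_finite}) applied at $r = a_0$, $\liminf_{i \to \infty} f_i(a_0) < +\infty$, so after passing to a subsequence---permissible under the reduction set up at the beginning of the proof---we may assume $\{f_i(a_0)\}$ converges to a finite limit, and is therefore bounded. The $1$-Lipschitz estimate then forces $|f_i(r)| \leq |f_i(a_0)| + (r - a_0)$, so $\{f_i\}$ is uniformly bounded on every compact subset of $[a_0, \infty)$.

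With equicontinuity and uniform boundedness in hand, Arzel\`a--Ascoli applied to each interval $[a_0, a_0 + k]$ for $k = 1, 2, \ldots$, combined with a standard diagonal extraction, produces a further subsequence converging uniformly on compact subsets of $[a_0, \infty)$ to a continuous (and automatically non-decreasing and $1$-Lipschitz) limit $f : [a_0, \infty) \to \R$. I do not expect a substantive obstacle here; the mildest subtlety is that the lemma statement reads ``converges'' rather than ``has a subsequence that converges'', which is harmless under the paper's convention that further subsequences may be extracted without loss of generality. Uniqueness of the limit, if desired later, would follow a posteriori from the observation that $\graph(f)$ is determined by the weak limit $N$ via rotational symmetry.
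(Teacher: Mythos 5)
There is a genuine gap at the point where you claim uniform boundedness: from (\ref{liminf_finite}) you only know $\liminf_{i\to\infty} f_i(a_0) < +\infty$, and this does \emph{not} allow you to pass to a subsequence along which $\{f_i(a_0)\}$ has a finite limit --- the liminf could equal $-\infty$ (e.g.\ $f_i(a_0)=-i$), in which case no subsequence is bounded below and Arzel\`a--Ascoli cannot be applied. Ruling this out is not a formality: it is exactly here that the hypothesis of weak convergence to a \emph{nonempty} limit $N$ must enter. The paper's proof handles this case explicitly: if $f_i(a)\to-\infty$ along a subsequence, then since each $f_i$ is non-decreasing and $1$-Lipschitz for $r\geq a$, the portion of $\graph(f_i)$ over any bounded range of radii slides down below every compact set of $\R^{n+1}$, so the graphs converge weakly to the zero current along that subsequence, forcing $N$ to be empty --- a contradiction. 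Your argument never uses this hypothesis for the boundedness step, so it cannot be complete as written (indeed, without the weak-convergence hypothesis the lemma's conclusion fails for the constant representatives $f_i\equiv -i$ of Euclidean space).

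A second, related shortfall: Arzel\`a--Ascoli plus diagonalization only yields a \emph{subsequence} converging locally uniformly, whereas the lemma asserts convergence of the sequence itself. The paper proves genuine pointwise convergence by a second use of weak convergence: if $\{f_i(a)\}$ had two subsequences separated in height, one constructs a compactly supported test $n$-form $\varphi$ (a bump times $dx_1\wedge\cdots\wedge dx_n$ localized in an annular cylinder) on which the two subsequences of currents evaluate differently, contradicting convergence to a single $N$; the uniform Lipschitz bound (\ref{eqn_f_i_prime_bound}) then upgrades pointwise to locally uniform convergence. Your closing remark that uniqueness of subsequential limits ``would follow from the observation that $\graph(f)$ is determined by $N$'' is the right idea and could be made to work (any two locally uniform subsequential limits have graphs agreeing with $N$ outside the cylinder of radius $a_0$, hence coincide, and then a standard subsequence-of-subsequences argument recovers full convergence), but you would need to actually carry this out --- it is the substance of the lemma, not an afterthought, and merely invoking the paper's convention about passing to subsequences does not prove the statement as formulated.
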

\begin{proof}
We first show pointwise convergence of $\{f_i\}$.  Fix $a \geq a_0$.  We analyze the sequence $\{f_i(a)\}$.

By (\ref{liminf_finite}), $L:=\liminf_{i \to \infty} f_i(a) < +\infty$.  
If $L = -\infty$, then since $f_i$ is nondecreasing for all values of $r$ and $f_i'(r) \leq 1$ for all $r \geq a$, the graph of $f_i$ eventually leaves any compact set as $i \to \infty$.  This would imply that the weak limit of $\{\graph(f_i)\}$ is the empty set (i.e., zero current), a contradiction.  Thus $L$ is finite.

Suppose $\{f_i(a)\}$ diverges.  Since $L$ is finite, there exist constants $z_0 \in \R$ and $\delta > 0$ and subsequences
$\{f_{i_k}(a)\}^{\infty}_{k=1}$ and $\{f_{j_k}(a)\}^{\infty}_{k=1}$ such that
$$L-1\leq f_{i_k}(a) \leq z_0 -2\delta, \text{ and }  \qquad f_{j_k}(a) \geq z_0 + 2\delta$$
for all $k \geq 1$.   By (\ref{eqn_f_i_prime_bound}), for $r$ in the interval $[a,a+\delta]$,
$$L-1 \leq f_{i_k}(r) \leq z_0 -\delta,  \text{ and } \qquad f_{j_k}(r) \geq z_0 + 2\delta.$$
Let $\varphi$ be the differential $n$-form on $\R^{n+1}$ given by $\rho dx_1 \wedge \ldots \wedge dx_n,$ where $\rho \geq 0$ is a smooth function of compact support with $\rho(\vec x,x_{n+1})=0$ if $x_{n+1} \geq z_0$ and $\rho(\vec x, x_{n+1})=1$ on the solid truncated annular cylinder given by 
$$ a \leq |\vec x| \leq a+\delta, \qquad L-1\leq x_{n+1} \leq z_0 -\delta.$$  
Then $\graph(f_{i_k})(\varphi)$ equals the $n$-volume of $ a \leq |\vec x| \leq a+\delta$ in $\R^n$ for each $k\geq 1$, while $\graph(f_{j_k})(\varphi)=0$ for each $k\geq 1$.  This contradicts the assumption on weak convergence.

It follows that $\{f_i\}$ converges pointwise to some function $f:[a_0,\infty) \to \R$.  By (\ref{eqn_f_i_prime_bound}) the convergence is uniform on compact sets, and so $f$ is continuous.
\end{proof}

\vspace{2mm}
\paragraph{\emph{Part 3:}}
Note that by Lemma \ref{lemma_uniform_convergence} and (\ref{eqn_f_i_prime_bound}), $f$ is Lipschitz, with Lipschitz constant at most 1.   By Rademacher's theorem,
$f$ is differentiable almost everywhere.  The next lemma establishes that the derivatives converge almost everywhere (which is false without assuming
the graphs of $f_i$ have nonnegative scalar curvature).

\begin{lemma}
\label{lemma_derivs_converge}
If $f'(r_0)$ is defined, then $\displaystyle \lim_{i \to \infty}f_i'(r_0) = f'(r_0)$.
\end{lemma}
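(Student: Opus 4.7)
The plan is to exploit the Hawking-mass monotonicity within each $(M_i,g_i)$---which, by (\ref{eqn_hawking_coordinate}), says that $r \mapsto r^{n-2} f_i'(r)^2/(1+f_i'(r)^2)$ is nondecreasing---to sandwich $f_i'(r)$ for $r$ near $r_0$ by quantities close to $f_i'(r_0)$, and then integrate and pass to the limit using the pointwise convergence $f_i \to f$ from Lemma \ref{lemma_uniform_convergence}.

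I would argue by contradiction. By (\ref{eqn_f_i_prime_bound}) the sequence $\{f_i'(r_0)\}$ lies in $[0,1]$, so it suffices to show that every subsequential limit equals $f'(r_0)$; relabeling, assume $f_i'(r_0) \to \alpha$ for some $\alpha \in [0,1]$ with $\alpha \ne f'(r_0)$. Hawking-mass monotonicity applied in $(M_i,g_i)$ gives
$$\frac{r^{n-2} f_i'(r)^2}{1+f_i'(r)^2} \ge \frac{r_0^{n-2} f_i'(r_0)^2}{1+f_i'(r_0)^2} \quad \text{for } r \ge r_0,$$
with the reversed inequality for $r \in [a_0, r_0]$. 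Since $u \mapsto u^2/(1+u^2)$ is strictly increasing on $[0,\infty)$, inverting these inequalities to solve for $f_i'(r)^2$ and letting $i \to \infty$ shows that for every $\eta > 0$ there exist $\delta > 0$ and $i_1$ such that, for all $i \ge i_1$, one has $f_i'(r) \ge \alpha - \eta$ on $[r_0, r_0+\delta]$ and $f_i'(r) \le \alpha + \eta$ on $[r_0-\delta, r_0]$. With these uniform one-sided bounds in hand, the conclusion follows by integration: if $\alpha > f'(r_0)$, integrating the lower bound and applying Lemma \ref{lemma_uniform_convergence} yields $f(r_0+\delta) - f(r_0) \ge (\alpha-\eta)\delta$, and then dividing by $\delta$ and sending $\delta, \eta \to 0^+$ gives $f'(r_0) \ge \alpha$, a contradiction; the case $\alpha < f'(r_0)$ is symmetric via the upper bound on $[r_0-\delta, r_0]$.

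The only delicate point is extracting the sandwich estimates from the Hawking-mass inequality: one needs $r^{n-2}$ to stay strictly above the constant $r_0^{n-2} f_i'(r_0)^2/(1+f_i'(r_0)^2)$ in order to solve for $f_i'(r)^2$, but this is automatic for $r$ near $r_0$ and $i$ large, since the constant tends to $r_0^{n-2}\alpha^2/(1+\alpha^2) \le r_0^{n-2}$, with equality only in the degenerate case $\alpha = \infty$ excluded by (\ref{eqn_f_i_prime_bound}). It is precisely here that nonnegative scalar curvature (via Hawking-mass monotonicity) is essential: without it, $f_i'$ could oscillate even where $f_i$ converges uniformly, and the lemma would fail, as the parenthetical remark in its statement warns.
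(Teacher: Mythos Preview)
Your proposal is correct and follows essentially the same approach as the paper: both argue by contradiction, invoke Hawking-mass monotonicity (\ref{eqn_hawking_coordinate}) to propagate a one-sided bound on $f_i'(r_0)$ to a uniform bound on $f_i'$ over a short interval on one side of $r_0$, then integrate and pass to the limit via Lemma \ref{lemma_uniform_convergence} to contradict differentiability of $f$ at $r_0$. The only cosmetic difference is that the paper compares $f_i'(r)$ against the difference quotient $\frac{f(r)-f(r_0)}{r-r_0}$ before integrating, whereas you integrate the constant bound $\alpha \pm \eta$ directly; your phrasing ``sending $\delta,\eta\to 0^+$'' should be read as first letting the upper endpoint of integration shrink to $r_0$ (which is legitimate since the derivative bound holds on the whole subinterval) and then letting $\eta\to 0$.
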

The proof of the lemma appears following the conclusion of the proof of Theorem \ref{thm_main}.

Now we establish weak convergence of the graphs outside a compact set.  Let $\graph_{a,b}(f)$ denote the graph of $f$ in $\R^{n+1}$, restricted between radii $a$ and $b$ in $[a_0,\infty]$.

\begin{lemma}
The set $\graph(f)$ is a Lipschitz hypersurface in $\R^{n+1}$, and the sequence $\{\graph_{a_0,\infty}(f_i)\}$ converges weakly to $\graph(f)$ as $i \to \infty$.
\end{lemma}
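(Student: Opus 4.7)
My plan is to handle the two assertions in sequence, reducing the weak-convergence claim to a dominated-convergence argument based on the pointwise convergence already established by Lemmas \ref{lemma_uniform_convergence} and \ref{lemma_derivs_converge}.

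First, for the Lipschitz claim: by Lemma \ref{lemma_uniform_convergence} together with the uniform gradient estimate (\ref{eqn_f_i_prime_bound}), each $f_i$ is 1-Lipschitz on $[a_0,\infty)$, and this bound passes to the uniform limit, so $f$ is 1-Lipschitz on $[a_0,\infty)$. Extending $f$ radially to the closed annular region $U := \{x \in \R^n : |x| \geq a_0\}$ gives a 1-Lipschitz function whose graph is by definition a Lipschitz hypersurface in $\R^{n+1}$; I would fix the ``upward'' orientation to match the orientation chosen on each $\graph(f_i)$.

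Next, for the weak-convergence claim, fix a compactly supported smooth $n$-form $\varphi$ on $\R^{n+1}$. Parametrizing each truncated graph by its projection to $\R^n$ and pulling back via $x \mapsto (x, f_i(x))$, a direct calculation gives
$$\graph_{a_0,\infty}(f_i)(\varphi) = \int_{U} \Phi\bigl(x, f_i(x), \nabla f_i(x)\bigr)\, dx,$$
where $\Phi(x, y, p)$ is polynomial of degree at most one in $p$, with coefficients that are component functions of $\varphi$ evaluated at $(x, y)$. Because $\varphi$ has compact support in $\R^{n+1}$, the integrand is supported in a fixed bounded set $K \subset U$ of finite Lebesgue measure (depending only on $\varphi$), and the coefficients of $\Phi$ are uniformly bounded on $K \times \R$ by the $C^0$-norm of $\varphi$.

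On $K$ I would combine three facts: $f_i \to f$ uniformly (Lemma \ref{lemma_uniform_convergence}); $\nabla f_i \to \nabla f$ almost everywhere, obtained from Lemma \ref{lemma_derivs_converge} together with the chain rule applied to the radial form $f_i(x) = f_i(|x|)$, which is valid since $|x| \geq a_0 > 0$ on $U$; and the uniform bound $|\nabla f_i| \leq 1$ from (\ref{eqn_f_i_prime_bound}). Hence $\Phi(x, f_i(x), \nabla f_i(x)) \to \Phi(x, f(x), \nabla f(x))$ a.e.\ on $K$, dominated by an $L^\infty(K)$ function. Dominated convergence then yields convergence of the integrals, which is precisely weak convergence of the currents. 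The only genuinely substantive input, namely a.e.\ convergence of the derivatives under the nonnegative-scalar-curvature hypothesis, has been isolated in Lemma \ref{lemma_derivs_converge} and is treated separately below; given that result, I expect no further obstacle here, as the remaining argument is routine pullback accounting plus dominated convergence.
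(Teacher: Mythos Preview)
Your argument is correct, but it takes a different route from the paper's. The paper establishes weak convergence via a direct flat-norm estimate: writing $\graph_{a_0,b}(f_i)-\graph_{a_0,b}(f)=\partial A_i+B_i$, where $A_i$ is the $(n+1)$-dimensional region between the two graphs and $B_i$ consists of the vertical cylinders at radii $a_0$ and $b$, one sees that $\vol_{n+1}(A_i)$ and $\vol_n(B_i)$ both tend to zero by the \emph{uniform} convergence $f_i\to f$ on compact sets alone. Pairing against a compactly supported $n$-form and using Stokes then gives weak convergence. In particular, the paper's proof does not invoke Lemma~\ref{lemma_derivs_converge} at all.

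Your pullback-plus-dominated-convergence argument is perfectly valid, but it leans on the stronger input of almost-everywhere convergence of derivatives, which you rightly flag as the substantive ingredient. The trade-off: your approach is more hands-on and computational, while the paper's flat-norm argument is more geometric and shows that $C^0$ convergence on compact sets already suffices for weak convergence of the graphs as currents---the derivative convergence of Lemma~\ref{lemma_derivs_converge} is needed only later, in Part~4, to control the Hawking masses. So your remark that derivative convergence is ``the only genuinely substantive input'' is true of your proof but not of the lemma itself.
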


\begin{proof}
Since $f$ is a Lipschitz function, its graph is a Lipschitz hypersurface.  By uniform convergence on compact sets, it is clear that for every $b > a_0$, $\graph_{a_0,b}(f_i)$ converges in the flat norm to $\graph_{a_0,b}(f)$.  To see this, let $A_i$ denote the $(n+1)$-current defined by the region between $\graph_{a_0,b}(f_i)$ and $\graph_{a_0,b}(f)$, oriented appropriately.  Let $B_i$ denote the $n$-current defined by the cylinders between the graphs of $f_i$ and $f$ at radii $a_0$ and $b$, oriented appropriately.  Then 
$$\graph_{a_0,b}(f_i) -\graph_{a_0,b}(f) = \partial A_i + B_i.$$
Moreover, the $(n+1)$-volume of $A_i$ and the $n$-volume of $B_i$ both converge to $0$ by uniform convergence of $f_i$ to $f$ on compact sets.  Thus,  we have convergence in the flat norm.

That weak convergence follows is well-known: given any compactly supported differential $n$-form $\varphi$ on $\R^{n+1}$, there exists a constant $k>0$ such that $\|\varphi\| \leq k$ and $\|d\varphi\| \leq k$ pointwise.  Choose $b>0$ sufficiently large so that the support of $\varphi$ is contained in the cylinder in $\R^{n+1}$ of radius $b$ about the $x_{n+1}$ axis.
Then
\begin{align*}
|\graph_{a_0,\infty}(f_i)(\varphi) -\graph_{a_0,\infty}(f)(\varphi)| &=
|\graph_{a_0,b}(f_i)(\varphi) -\graph_{a_0,b}(f)(\varphi)|\\
 &= |\partial A_i(\varphi) + B_i(\varphi)|\\
&\leq |A_i(d\varphi)| + |B_i(\varphi)|\\
&\leq k \vol_{n+1}(A_i) + k \vol_n(B_i),
\end{align*}
which converges to zero.  Here, $\vol_{p}$ denotes the $p$-dimensional volume (which is traditionally called the ``mass'' of a current, a term we avoid here).
\end{proof}

Since $\graph(f_i)$ is assumed to converge weakly to some Lipschitz hypersurface $N$, it is clear that $\graph(f)$ and $N$ are equal when intersected with the  complement of the closed cylinder of radius $a_0$ about the $x_{n+1}$ axis.  Thus, we define the ADM mass
of $N$ as the ADM mass of $\graph(f)$, provided the latter is well-defined.  The following argument shows this to be the case.

Note that the spheres $\Sigma_r$ in $\graph(f)$, for $r \geq a_0$, have Hawking mass defined for almost all values of $r$, by formula (\ref{eqn_hawking_coordinate}) and the fact that $f$ is differentiable almost everywhere.  Now, Lemma
\ref{lemma_derivs_converge} shows that the Hawking mass functions for $\graph(f_i)$ converge pointwise  almost-everywhere to the Hawking mass function for $\graph(f)$ on $[a_0,\infty)$ as $i \to \infty$.  Since the former are non-decreasing for each $i$, it follows
that the Hawking mass function for $\graph(f)$ is non-decreasing as well.
In particular, the ADM mass of $\graph(f)$ (and hence $N$) is well-defined as the limit $r \to \infty$ of the Hawking mass, possibly $+\infty$.

\vspace{2mm}
\paragraph{\emph{Part 4:}}
For $r \geq a_0$, let $m_H^{(i)}(\Sigma_r)$ and $m_H(\Sigma_r)$ denote the Hawking masses of $\Sigma_r$ computed respectively in $\graph(f_i)$ or $\graph(f)$ (if defined in the latter case).  Let $m$ denote the ADM mass of $\graph(f)$ (which equals the ADM mass of $N$).

First, consider the case in which $m$ is finite. Let $\epsilon > 0$.  
Since $\lim_{r \to \infty} m_H(\Sigma_r) = m$, there exists $a_1 \geq a_0$ sufficiently large so that
$$|m - m_H(\Sigma_r)| < \frac{\epsilon}{2}$$
for all $r \geq a_1$ for which $m_H(\Sigma_r)$ is defined.  By Lemma \ref{lemma_derivs_converge}, there exists $a_2 \geq a_1$ for which $f_i'(a_2)$ converges $f'(a_2)$ as $i \to \infty$, so that $m_H^{(i)}(\Sigma_{a_2})$ converges to $m_H(\Sigma_{a_2})$ by (\ref{eqn_hawking_coordinate}). Consequently, there exists $i_0 \geq 1$ such that
$$|m_H^{(i)}(\Sigma_{a_2}) - m_H(\Sigma_{a_2})| < \frac{\epsilon}{2}$$
for all $i \geq i_0$.  In particular, for $i \geq i_0$,
$$m < m_H(\Sigma_{a_2}) + \frac{\epsilon}{2} <  m_H^{(i)}(\Sigma_{a_2}) + \epsilon \leq  m_i +\epsilon,$$
where we have used the fact that the Hawking mass in $(M_i,g_i)$ limits monotonically to the ADM mass thereof.  Taking $\liminf_{i \to \infty}$ completes the proof for the case of $m$ finite, as $\epsilon$ was arbitrary.

Second, suppose $m=+\infty$, and let $C_0>0$ be any large constant.  There exists $a_1\geq a_0$ sufficiently large so that 
$$m_H(\Sigma_r) \geq 2C_0$$
for all $r \geq a_1$ for which $m_H(\Sigma_r)$ is defined.  For some $a_2 \geq a_1$, Lemma \ref{lemma_derivs_converge} and (\ref{eqn_hawking_coordinate}) assure that
$m_H^{(i)}(\Sigma_{a_2})$ converges to $m_H(\Sigma_{a_2})$, so that for some $i_0 \geq 1$,
$$|m_H^{(i)}(\Sigma_{a_2}) - m_H(\Sigma_{a_2})| < C_0,$$
for $i \geq i_0$. Then by the monotonicity of the Hawking mass,
$$ C_0 \leq m_H(\Sigma_{a_2}) - C_0 \leq m_H^{(i)}(\Sigma_{a_2}) \leq m_i.$$

Thus $\liminf_{i \to \infty} m_i \geq C_0$, where $C_0$ is arbitrary.  The proof is complete.
\end{proof}

Now we return to the proof of the key lemma giving almost-everywhere convergence of the derivatives of the graph functions:
\begin{proof}[Proof of Lemma \ref{lemma_derivs_converge}]
Assume $f$ is differentiable at $r_0$. If $\{f_i'(r_0)\}$ does not converge to $f'(r_0)$, there exists $\epsilon > 0$ and a subsequence (of the same name, say) for which either
\begin{equation}
\label{case1}
f_i'(r_0) \geq f'(r_0) + 3\epsilon
\end{equation}
or
\begin{equation}
\label{case2}
f_i'(r_0) \leq f'(r_0) - 3\epsilon.
\end{equation}
Assume first that (\ref{case1}) holds.  Since the graph of $f_i$ has nonnegative scalar curvature, the Hawking mass function (\ref{eqn_hawking_coordinate}) 
$$r^{n-2} \frac{f_i'(r)^2}{1+f_i'(r)^2}$$
is non-decreasing as a function of $r$ for each $i$.  Let $H:[0,\infty) \to [0,1)$ be the increasing homeomorphism $H(y) = \frac{y^2}{1+y^2}$.  Then for $r \geq r_0$, we have
$$r^{n-2} H(f_i'(r)) \geq r_0^{n-2} H(f_i'(r_0)) \geq r_0^{n-2} H(f'(r_0) + 3\epsilon)$$
for each $i$ by (\ref{case1}). Since $H\inv$ is increasing, we have
$$f_i'(r) \geq H\inv \left(\frac{r_0^{n-2}}{r^{n-2}} H\big(f'(r_0) + 3\epsilon\big)\right)$$
for each $i$.  The right-hand side defines a continuous function of $r$ on $[r_0,\infty)$ that limits to $f'(r_0) + 3\epsilon$ as $r\searrow r_0$.  Thus, there exists $\delta > 0$ such that
$$f_i'(r) \geq f'(r_0) + 2\epsilon$$
for $r \in [r_0,r_0 + \delta]$ and all $i$.  Since $f$ is differentiable at $r_0$, we may shrink $\delta$ if necessary, independently of $i$, to arrange that
$$f_i'(r) \geq \frac{f(r)-f(r_0)}{r-r_0} + \epsilon$$
for $r \in [r_0,r_0 + \delta]$ and all $i$.  Both the left- and right-hand sides are continuous functions on $[r_0,r_0+\delta]$, so that we may integrate from $r_0$ to $r_0+c$, where $c \in(0,\delta]$, to obtain
$$f_i(r_0+c)-f_i(r_0) \geq  \int_{r_0}^{r_0+c} \frac{f(r)-f(r_0)}{r-r_0} dr + c\epsilon$$
for each $i$.
Take the limit as $i \to \infty$, using the pointwise convergence of $f_i$ to $f$:
$$\frac{f(r_0+c)-f(r_0)}{c} \geq  \frac{1}{c}\int_{r_0}^{r_0+c} \frac{f(r)-f(r_0)}{r-r_0} dr + \epsilon.$$
Taking the limit $c \to 0^+$ implies $f'(r_0) \geq f'(r_0) + \epsilon$, a contradiction.

The proof of case (\ref{case2}) is very similar.
\end{proof}

\section{Lower semicontinuity in rotational symmetry: intrinsic flat convergence}
\label{sec_IF}

A natural extension of the present work is to consider the question of lower semicontinuity of the ADM mass for other modes of convergence, such as pointed convergence in the intrinsic flat distance of Sormani and Wenger \cite{sormani_wenger}.

The intrinsic flat distance shows promise for applications to general relativity.  We refer specifically to results of Lee and Sormani on the stability of the positive mass theorem and Penrose inequality for the intrinsic flat distance within a class of rotationally symmetric manifolds \cites{lee_sormani,lee_sormani2}, and to a compactness result of LeFloch and Sormani within the same class (allowing lower regularity) \cite{lefloch_sormani}.  The very basic idea behind defining the intrinsic flat distance between compact Riemannian manifolds is to 1) embed them isometrically into a complete metric space, 2) view their images as generalized integral currents and compute their flat distance, then 3) minimize over all such metric spaces and isometric embeddings.

Below we state and prove a lower semicontinuity result for the ADM mass on the class $\RotSym$ (defined in section \ref{subsec_rot_sym}) with respect to a pointed notion of intrinsic flat convergence.  

To set some notation, suppose $(M,g) \in \RotSym$.  For each number $A> |\partial M|$, there exists a unique rotationally symmetric sphere $\Sigma^A_g$ in $M$ with area $A$.  Define $U_g^A$ to be the open region bounded by $\Sigma^A_g$ and $\partial M$.  Define $U_g^A$ to be empty if $A \leq |\partial M|$.
\begin{thm}
Let $\{(M_i,g_i)\}$ denote a sequence in $\RotSymm_n$ and let $(N,h) \in \RotSymm_n$. 
Assume $\liminf_{i \to \infty} m_{ADM}(M_i,g_i)$ is finite.\footnote{If the $\liminf$ is infinite, (\ref{eqn_LSC_IF})  follows trivially.}  Then for each $A>0$ sufficiently large, $(U_{g_i}^A, g_i)$ is nonempty for $i$ sufficiently large.

Assume that for almost every $A>0$ sufficiently large, $(U_{g_i}^A, g_i)$ converges in the intrinsic flat distance to $(U_h^A,h)$, and that the diameter of $(U_{g_i}^A, g_i)$ is bounded above independently of $i$. Then
\begin{equation}
\label{eqn_LSC_IF}
m_{ADM}(N,h) \leq \liminf_{i \to \infty} m_{ADM}(M_i,g_i).
\end{equation}
\end{thm}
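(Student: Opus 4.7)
The plan is to reduce the ADM mass to a limit of Hawking masses on the rotationally symmetric spheres of prescribed area and then to extract from the intrinsic flat convergence (combined with the LeFloch--Sormani theory for $\RotSymm_n$) enough regularity of the profile functions to obtain convergence of these Hawking masses on the outer boundaries $\Sigma^A$.

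Step 1 (Nonemptiness). After passing to a subsequence realizing the $\liminf$, let $C>0$ be an upper bound for $m_{ADM}(M_i,g_i)$. By the monotonicity of the Hawking mass and formula (\ref{eqn_hawking_metric}) evaluated at $s=0$ (using $h_i'(0)=0$ when $\partial M_i\ne\emptyset$, or $h_i(0)=0$ in the pole case), we have $\tfrac{1}{2}h_i(0)^{n-2}\le m_{ADM}(M_i,g_i)\le C$. Hence $|\partial M_i|=\omega_{n-1}h_i(0)^{n-1}$ is bounded independently of $i$, and for any $A$ strictly larger than this bound $U_{g_i}^A$ is nonempty for all sufficiently large $i$.

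Step 2 (Hawking mass convergence at a.e.\ $A$). From (\ref{eqn_hawking_metric}), in each $(M_i,g_i)$,
$$m_H(\Sigma_{g_i}^A)=\tfrac{1}{2}\Bigl(\tfrac{A}{\omega_{n-1}}\Bigr)^{(n-2)/(n-1)}\bigl(1-h_i'(s_i^A)^2\bigr),$$
where $s_i^A$ is the unique value with $\omega_{n-1}h_i(s_i^A)^{n-1}=A$. I would invoke the LeFloch--Sormani framework \cite{lefloch_sormani}: for intrinsic flat convergence of $U_{g_i}^A$ to $U_h^A$ with uniformly bounded diameter, within $\RotSymm_n$, the profile functions $h_i$ converge to $h$ in a sufficiently strong sense that, combined with the monotonicity of the Hawking mass forced by nonnegative scalar curvature, yields almost-everywhere convergence of the derivatives $h_i'$. (This is in the spirit of Lemma \ref{lemma_derivs_converge}.) Therefore, for almost every $A$ sufficiently large, $h_i'(s_i^A)\to h'(s^A)$ and consequently
$$m_H(\Sigma^A_{g_i})\longrightarrow m_H(\Sigma^A_h)\qquad\text{as }i\to\infty.$$

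Step 3 (Conclusion). For such an $A$, the monotonicity of the Hawking mass within each $(M_i,g_i)$ yields $m_H(\Sigma^A_{g_i})\le m_{ADM}(M_i,g_i)$, so
$$m_H(\Sigma^A_h)=\lim_{i\to\infty}m_H(\Sigma^A_{g_i})\le\liminf_{i\to\infty}m_{ADM}(M_i,g_i).$$
Sending $A\to\infty$ along admissible values and applying (\ref{def_adm_limit}) on the left-hand side gives (\ref{eqn_LSC_IF}).

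The principal obstacle lies in Step 2. Intrinsic flat convergence is a weak mode of convergence that is insensitive to local oscillations of the metric and by itself does not control the derivative of a profile function on any particular level set. Extracting the required almost-everywhere convergence of $h_i'$ relies essentially on the rigidity conferred by rotational symmetry, the uniform diameter bound, and the Hawking-mass monotonicity afforded by nonnegative scalar curvature, all brought together through the compactness/convergence theory of LeFloch and Sormani. Once this has been secured, the remainder of the argument parallels Part 4 of the proof of Theorem \ref{thm_main}.
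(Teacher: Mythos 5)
Your overall architecture is the same as the paper's: bound the boundary areas by the mass to get nonemptiness, lower-bound $m_{ADM}(M_i,g_i)$ by the Hawking mass of $\partial U_{g_i}^A \sm \partial M_i$ via monotonicity, show these boundary Hawking masses converge to $m_H(\Sigma^A)$, and let $A$ exhaust the manifold (the paper does this with a fixed large $A$ and an $\epsilon/2$ argument rather than sending $A\to\infty$, an inessential difference). The problem is that the load-bearing step --- your Step 2 --- is not proved, and you say so yourself. Asserting that ``the LeFloch--Sormani framework'' upgrades intrinsic flat convergence of $(U_{g_i}^A,g_i)$ plus a diameter bound to almost-everywhere convergence of the profile derivatives $h_i'$, in the spirit of Lemma \ref{lemma_derivs_converge}, is not an argument: intrinsic flat convergence gives no pointwise control of $h_i'$ on any prescribed level set, and Lemma \ref{lemma_derivs_converge} was proved in a different setting (uniform convergence of graph functions in $\R^{n+1}$ with monotone Hawking mass in the radial coordinate), so it does not transfer as stated. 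As written, the proof has a genuine gap exactly where you flag the ``principal obstacle.''

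The gap is closed not by re-deriving derivative convergence but by quoting the precise result: Theorem 8.1 of \cite{lefloch_sormani}. Given uniform bounds on the masses, on the boundary areas (both of which you already have from Step 1), and on the \emph{depth} of $\partial U_{g_i}^A$ --- which is where the hypothesis that the diameter of $(U_{g_i}^A,g_i)$ is bounded independently of $i$ enters; you cite the diameter bound but never use it --- that theorem yields a subsequence of $(U_{g_i}^A,g_i)$ converging in the intrinsic flat distance, and, as part of its conclusion, the convergence of the Hawking masses of $\partial U_{g_i}^A \sm \partial M_i$ to the Hawking mass of the corresponding boundary sphere of the limit. Since the hypothesis of the theorem identifies that limit as $(U_h^A,h)$, the boundary Hawking masses converge to $m_H(\Sigma^A)$, and your Step 3 then goes through. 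So the structure of your proposal is salvageable, but the crucial convergence must be obtained as a stated conclusion of the cited compactness theorem rather than through an unestablished profile-derivative argument.
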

The main ingredient in the proof is a compactness theorem of LeFloch and Sormani \cite{lefloch_sormani}.

\begin{proof}
 Passing to a subsequence (denoted the same), there exists a uniform upper bound $C> 0$ of $\{m_{ADM}(M_i,g_i)\}$.  By (\ref{eqn_penrose}), there also exists a uniform upper bound for the boundary areas $|\partial M_i|_{g_i}$, which shows that for $A$ large enough,  
 $(U_{g_i}^A, g_i)$ is eventually nonempty as $i \to \infty$.

Let $\epsilon > 0$.  Since the Hawking mass of rotationally symmetric spheres monotonically increases to the ADM mass, there exists $A>0$ such that
\begin{equation}
\label{eqn_masses_IF}
m_{ADM}(N,h) - m_H(\Sigma^A) < \frac{\epsilon}{2},
\end{equation}
where $\Sigma^A = \partial U_h^A \sm \partial N$ and $m_H$ is the Hawking mass with respect to $h$.  If necessary, increase $A$ so that the hypotheses of the theorem apply: 
$(U_{g_i}^A, g_i)$ converges in the intrinsic flat distance to $(U_h^A,h)$, and the diameter of $(U_{g_i}^A, g_i)$ is bounded above independently of $i$.  The latter shows the \emph{depth} of $\partial U_{g_i}^A$ to be bounded above independently of $i$, as defined in \cite{lefloch_sormani}.

By Theorem 8.1 of \cite{lefloch_sormani}\footnote{Note that our class $\RotSym$ is a subset of the class $\overline{\RotSym}^{\text{weak},1}$ considered in \cite{lefloch_sormani}.}, a subsequence of $(U_{g_i}^A, g_i)$ converges in the intrinsic flat distance to some limit; by our hypothesis, the limit is $(U_h^A,h)$.  This theorem also establishes that the Hawking masses of $\partial U^A_{g_i} \sm \partial M_i$ converge to $m_H(\Sigma^A)$ as $i \to \infty$.  

Putting this all together, the ADM mass of $(M_i,g_i)$ is at least the Hawking mass of $\partial U_{g_i}^A \sm \partial M_i$ with respect to $g_i$, which is within $\frac{\epsilon}{2}$ of $m_H(\Sigma^A)$ for $i$ sufficiently large.  By (\ref{eqn_masses_IF}), we see that $m_{ADM}(M_i,g_i)$ is at least $m_{ADM}(N,h) -\epsilon$ for $i$ sufficiently large.  Inequality (\ref{eqn_LSC_IF}) follows.
\end{proof}

We conjecture that (\ref{eqn_LSC_IF}) holds on the space of asymptotically flat $n$-manifolds of nonnegative scalar curvature containing no compact minimal surfaces, with respect to pointed intrinsic flat convergence.

\newpage

\begin{bibdiv}
 \begin{biblist}

 \bib{adm}{article}{
   author={Arnowitt, R.},
   author={Deser, S.},
   author={Misner, C.},
   title={Coordinate invariance and energy expressions in general relativity},
   journal={Phys. Rev. (2)},
   volume={122},
   date={1961},
   pages={997--1006},
}

\bib{bartnik_adm}{article}{
   author={Bartnik, R.},
   title={The mass of an asymptotically flat manifold},
   journal={Comm. Pure Appl. Math.},
   volume={39},
   date={1986},
   number={5},
   pages={661--693},
}

\bib{bartnik}{article}{
   author={Bartnik, R.},
   title={Energy in general relativity},
   conference={
      title={Tsing Hua lectures on geometry \& analysis},
      address={Hsinchu},
      date={1990--1991},
   },
   book={
      publisher={Int. Press, Cambridge, MA},
   },
   date={1997},
   pages={5--27},
}

\bib{bartnik2}{article}{
   author={Bartnik, R.},
   title={Mass and 3-metrics of non-negative scalar curvature},
   conference={
      title={Proceedings of the International Congress of Mathematicians,
      Vol. II },
      address={Beijing},
      date={2002},
   },
   book={
      publisher={Higher Ed. Press, Beijing},
   },
   date={2002},
   pages={231--240},
}

\bib{bray_finster}{article}{
   author={Bray, H.},
   author={Finster, F.},
   title={Curvature estimates and the positive mass theorem},
   journal={Comm. Anal. Geom.},
   volume={10},
   date={2002},
   number={2},
   pages={291--306},
}

\bib{CM}{book}{
   author={Colding, T.},
   author={Minicozzi, W., II},
   title={Minimal surfaces},
   series={Courant Lecture Notes in Mathematics},
   volume={4},
   publisher={New York University, Courant Institute of Mathematical
   Sciences, New York},
   date={1999},
}

\bib{dai_ma}{article}{
   author={Dai, X.},
   author={Ma, L.},
   title={Mass under the Ricci flow},
   journal={Comm. Math. Phys.},
   volume={274},
   date={2007},
   number={1},
   pages={65--80},
}

\bib{FST}{article}{
   author={Fan, X.-Q.},
   author={Shi, Y.},
   author={Tam, L.-F.},
   title={Large-sphere and small-sphere limits of the Brown-York mass},
   journal={Comm. Anal. Geom.},
   volume={17},
   date={2009},
   number={1},
   pages={37--72},
   }

\bib{finster_kath}{article}{
   author={Finster, F.},
   author={Kath, I.},
   title={Curvature estimates in asymptotically flat manifolds of positive
   scalar curvature},
   journal={Comm. Anal. Geom.},
   volume={10},
   date={2002},
   number={5},
   pages={1017--1031},
   issn={1019-8385},
}

\bib{haslhofer}{article}{
   author={Haslhofer, R.},
   title={A mass-decreasing flow in dimension three},
   journal={Math. Res. Lett.},
   volume={19},
   date={2012},
   number={4},
   pages={927--938},
}

\bib{huang_lee}{article}{
	author={Huang, L.-H.},
	author={Lee, D.},
	title={Stability of the positive mass theorem for graphical hypersurfaces of Euclidean space},
	date={2014},
	eprint={http://arxiv.org/abs/1405.0640},
}

\bib{huang_lee_sormani}{article}{
	author={Huang, L.-H.},
	author={Lee, D.},
	author={Sormani, S.},
	title={Intrinsic flat stability of the positive mass theorem for graphical hypersurfaces of Euclidean space},
	date={2014},
	eprint={http://arxiv.org/abs/1408.4319},
}

\bib{imcf}{article}{
   author={Huisken, G.},
   author={Ilmanen, T.},
   title={The inverse mean curvature flow and the Riemannian Penrose
   inequality},
   journal={J. Differential Geom.},
   volume={59},
   date={2001},
   number={3},
   pages={353--437},
}

\bib{lee}{article}{
   author={Lee, D.},
   title={On the near-equality case of the positive mass theorem},
   journal={Duke Math. J.},
   volume={148},
   date={2009},
   number={1},
   pages={63--80},
}

\bib{lee_sormani}{article}{
	author={Lee, D.},
	author={Sormani, C.},
	title={Stability of the positive mass theorem for rotationally symmetric Riemannian manifolds},
	journal={J. Reine Angew. Math.},
    volume={686},
    date={2014},
   pages={187--220},
}

\bib{lee_sormani2}{article}{
   author={Lee, D.},
   author={Sormani, C.},
   title={Near-equality of the Penrose inequality for rotationally symmetric
   Riemannian manifolds},
   journal={Ann. Henri Poincar\'e},
   volume={13},
   date={2012},
   number={7},
   pages={1537--1556},
}

\bib{lefloch_sormani}{article}{
	author={LeFloch, P.},
	author={Sormani, C.},
	title={The nonlinear stability of rotationally symmetric spaces with low regularity},
	date={2014},
	eprint={http://arxiv.org/abs/1401.6192},
}

\bib{list}{article}{
   author={List, B.},
   title={Evolution of an extended Ricci flow system},
   journal={Comm. Anal. Geom.},
   volume={16},
   date={2008},
   number={5},
   pages={1007--1048},
}

\bib{OW}{article}{
   author={Oliynyk, T.},
   author={Woolgar, E.},
   title={Rotationally symmetric Ricci flow on asymptotically flat
   manifolds},
   journal={Comm. Anal. Geom.},
   volume={15},
   date={2007},
   number={3},
   pages={535--568},
}

\bib{schoen_yau}{article}{
	author={Schoen, R.},
	author={Yau, S.-T.},
	title={On the proof of the positive mass conjecture in general relativity},
	journal={Comm. Math. Phys.},
	volume={65},
	year={1979},
	pages={45--76},
}

\bib{sormani_wenger}{article}{
   author={Sormani, C.},
   author={Wenger, S.},
   title={The intrinsic flat distance between Riemannian manifolds and other
   integral current spaces},
   journal={J. Differential Geom.},
   volume={87},
   date={2011},
   number={1},
   pages={117--199},
}

\bib{witten}{article}{
	author={Witten, E.},
	title={A new proof of the positive energy theorem},
	journal={Comm. Math. Phys.},
	volume={80},
	year={1981},
	pages={381-402},
}

\end{biblist}
\end{bibdiv}

\end{document}